\documentclass[12pt]{article}

\usepackage[centertags]{amsmath}

\usepackage{amssymb}
\usepackage{amsthm}
\usepackage[misc]{ifsym}
\usepackage{mathrsfs}
\usepackage{graphicx}
\usepackage{epstopdf}
\usepackage{pgf,tikz}
\usetikzlibrary{arrows}
\usepackage{lineno}
\usepackage{authblk}
\usepackage{comment}

\theoremstyle{plain}
\newtheorem{thm}{Theorem}[section]
\newtheorem{cor}[thm]{Corollary}
\newtheorem{lem}[thm]{Lemma}
\newtheorem{prop}[thm]{Proposition}

\theoremstyle{definition}
\newtheorem{defn}[thm]{Definition}
\newtheorem{exam}[thm]{Example}
\newtheorem{rem}[thm]{Remark}

\theoremstyle{remark}
\numberwithin{equation}{section}

\begin{document}

\title{A matrix realization of spectral bounds}

\author[*]{Yen-Jen Cheng \Letter}
\author[*]{Chih-wen Weng}
\affil[*]{Department of Applied Mathematics, National Yang Ming Chiao Tung University, 1001 University Road, Hsinchu, Taiwan}

\maketitle

\begin{abstract}
We give a unified and systematic way to find bounds for the largest real eigenvalue of a nonnegative matrix by considering its modified quotient matrix. 
We leverage this insight to identify the unique class of matrices whose largest real eigenvalue is maximum among all $(0,1)$-matrices with a specified number of ones. This result resolves a problem that was posed independently by R. Brualdi and A. Hoffman, as well as F. Friedland, back in 1985.
\end{abstract}

{\bf keywords}:  nonnegative matrices, spectral radius, spectral bounds

{\bf MSC2010}:  05C50, 15A42

\bigskip

\section{Introduction}\label{s1}
\subsection{Spectral radius of a nonnegative matrix}
All  vectors and matrices in this paper are over the field of real numbers. Let $C=(c_{ij})$ be an $n\times n$ matrix. The {\it spectral radius} of  $C$ is defined to be
$\rho(C):=\{|\lambda|~\colon~\lambda~\hbox{is an eigenvalue of }C\},$
where $|\lambda|$ is the magnitude of complex number $\lambda$. Let $\rho_r(C)$ denote the largest real eigenvalue of $C$ and $\rho_r(C)=\infty$ if $C$ has no real eigenvalues. A vector $(v_1, v_2, \ldots, v_n)$ is called {\it rooted} if $v_i\geq v_n\geq 0$ for $1\leq i\leq n-1.$
If $r_i:=\sum_{j=1}^n c_{ij}$ for $1\leq i\leq n$,  the tuple $(r_1,r_2,\ldots,r_n)$ is called the {\it row-sum vector} of $C$.
Our first main theorem is the following.
\bigskip

\noindent {\bf Theorem A.}
Let $M=(m_{ab})$ be an $\ell\times \ell$ matrix whose first $\ell-1$ columns and row-sum vector are all  rooted.
If $C=(c_{ij})$ is an $n\times n$ nonnegative matrix and there exists a partition $\Pi=(\pi_1, \pi_2, \ldots, \pi_\ell)$ of $[n]:=\{1, 2, \ldots, n\}$ such that
$$\max_{i\in \pi_a} \sum_{j\in \pi_b} c_{ij}\leq m_{ab} \quad \hbox{and} \quad \max_{i\in \pi_a} \sum_{j=1}^n c_{ij}\leq \sum_{c=1}^\ell m_{ac}$$
for $1\leq a\leq \ell$ and $1\leq b\leq \ell-1$, then $\rho(C)\leq \rho_r(M).$

\bigskip
The cases $\ell=n$ and $c_{ij}\leq m_{ij}$ for all $i,j$, and the case $\ell=1$ in Theorem A are two well-known applications of Perron-Frobenius Theorem, cf. Lemma~\ref{le2.2} and Lemma~\ref{le2.3}  in Section~\ref{s2}.
A very special situation of the case $\ell=n=2$ is
$$\rho \begin{pmatrix} c_{11} & c_{12} \\ c_{21} & c_{22}\end{pmatrix} \leq \rho_r\begin{pmatrix} 5 & 2 \\ 4 & -1 \end{pmatrix}=2+\sqrt{17}$$
for nonnegative numbers $c_{11}$, $c_{12}$, $c_{21}$, $c_{22}$  satisfying $c_{11}\leq 5$,~$c_{21}\leq 4$, $c_{11}+c_{12}\leq 7$ and $c_{21}+c_{22}\leq 3$.
The above upper bound $2+\sqrt{17}$ is smaller than the maximum row-sum $7$ of $C$, an upper bound obtained by applying Theorem~A with $\ell=1$ and $n=2$.
Indeed using $\ell=2$ in Theorem~A to compute $\rho(M)$ for a particular $M$, a lot of existing spectral bounds of nonnegative square matrices of arbitrary orders which involve square roots in their expressions can be easily reproved, cf. the results in  \cite{bh:85, cls:13, dz:13, h:98, hsf:01, hw:14, lw:13, lw:15, s:87, sw:04}  to name a few. The matrices $C$ with $\rho(C)=\rho_r(M)$ in Theorem A are also determined. Corollary~\ref{cor5.1'} will give the detailed description.

Theorem A has a dual version that deals with lower bounds.
In addition to the above results, Lemma~\ref{le3.1}, Lemma~\ref{le3.2} are of independent interest in matrix theory.

\subsection{Nonnegative matrices with prescribed sum of entries}

In 1964, B. Schwarz \cite{s64} discussed matrices obtained by rearranging the entries of a nonnegative matrix, focusing on the matrices with maximum and minimum spectral radius, respectively. Motivated by this seminal paper, the problem of finding the maximum spectral radius of $(0, 1)$-matrices with precisely $e$ ones  was proposed by R. Brualdi and A. Hoffman in 1976 \cite[p. 438]{bfvs:76}, and ten years later they gave the following conjecture in 1985  \cite{bh:85}.
\bigskip

\noindent {\bf Conjecture B.} If $$e=\frac{c(c-1)}{2}+t, \qquad \hbox{where~}t<c,$$
the maximum spectral radius of an undirected graph with $e$ edges and without isolated vertices is attained by taking the complete graph $K_c$ with $c$ vertices and adding a new vertex which is joined to $t$ of the vertices of $K_c$.
\bigskip

Conjecture B is partially solved by F. Friedland in 1985 \cite{f:85} and R. Stanley in 1987 \cite{s:87}, and totally solved by P. Rowlinson in 1988 \cite{r:88}. For the directed graph situation, R. Brualdi and A. Hoffman  \cite{bh:85} and F. Friedland  \cite{f:85}  believed the following conjecture.
\bigskip

\noindent {\bf Conjecture C.}  Let $\mathscr{S}(n,e)$ denote the set of $n\times n$ $(0, 1)$-matrices having exactly $e=c^2+t$ ones, where $2\leq t\leq 2c$.
If $A\in \mathscr{S}(n,e)$ attains the maximum spectral radius, then there exists a permutation matrix $P$ such that $PAP^T$ or $PA^TP^T$ has the form
\begin{equation}\label{e1.1}
\begin{array}{ll}
    \begin{pmatrix}
        J_c &
            \begin{array}{c}
                J_{\bigl\lfloor\frac{t}{2}\bigr\rfloor \times 1}\\
                O_{(c-\bigl\lfloor\frac{t}{2}\bigr\rfloor)\times 1}
            \end{array} \\
        \begin{array}{cc}
            J_{1\times \bigl\lceil\frac{t}{2}\bigr\rceil} & O_{1\times
            (c-\bigl\lceil\frac{t}{2}\bigr\rceil)}
        \end{array} &0
    \end{pmatrix}\oplus O_{n-c-1},
\end{array}
\end{equation}
if $t\not=2c-3$; and has the form
\begin{equation}\label{e1.2}
    \begin{pmatrix}
        J_{c-1} & J_{(c-1)\times 2}\\
        J_{2\times (c-1)} & O_2
     \end{pmatrix}\oplus O_{n-c-1}
\end{equation}
if $t=2c-3$,
where $J_{s\times t}$ is the $s\times t$ all-one matrix, $O_{s\times t}$  is the $s\times t$ zero matrix, $J_s=J_{s\times s}$, $O_s=O_{s\times s}$ and $\oplus$ is the direct sum operation of two matrices.
\bigskip

 The extremal matrices $A$ in cases $t=0$ and $t=1$ have been found in  \cite{bh:85} when Conjecture C was proposed: $PAP^T=J_c\oplus O_{n-c}$ when $t=0$;    $PAP^T=(J_c\oplus O_{n-c})+E_{ij}$ when $t=1$, where $ij$ is a position that $(J_c\oplus O_{n-c})_{ij}=0$ and $E_{ij}$ is the $n\times n$ matrix with all zero entries except for a $1$ in the $ij$ position. The cases $t=2c$, $t=2c-3$ and other $t$ much smaller than $c$ in Conjecture C was solved by F. Friedland   \cite{f:85}.  Snellman proved Conjecture C for relatively large $t$ by using combinatorial reciprocity theorem in 2003 \cite{s:03}. We will apply Theorem A to prove Conjecture~C in Section \ref{s6.1}.

Conjecture B has a generalized version that deals with $(0,1)$-matrices with zero trace.
 This line of study is usually parallel to the study of Conjecture C.
More recent result is in 2015 \cite{jz:15}, when Y. Jin and X. Zhang consider the case that $t$ is much smaller than $c$. We solve this completely in Section \ref{s6.2}.

We hope the method developed in this paper provides an efficient way of solving more extremal problems related to graphs and their spectral radii.

\subsection{Organization of the paper}

The paper is organized as follows. In Section~\ref{s2} we give some preliminaries.
Theorem \ref{th3.4} in Section~\ref{s3} is our key tool, which is not easy to apply since rooted eigenvectors are involved. In Section~\ref{s4} we provide a method to construct matrices having rooted eigenvectors. In Section~\ref{s5},
we prove  Corollary~\ref{cor5.1'}, which is a strengthening of Theorem A.  In Section~\ref{s6} we provide the following three applications of Theorem A:
\begin{enumerate}
\item[1.] Prove Conjecture C;
\item[2.] Prove the nonsymmetric matrix version of Conjecture B;
\item[3.] Determine the matrix whose spectral radius is maximum among nonnegative matrices with the largest diagonal (resp. off diagonal) element $d$ (resp. $f$) and prescribed sum of their entries.
\end{enumerate}

\section{Preliminaries}\label{s2}

Our study is based on the well-known Perron-Frobenius Theorem. Here we review the necessary parts of the theorem.

\begin{thm}[{\cite[Theorem 2.2.1]{Brou},  \cite[Corollary 8.1.29, Theorem 8.3.2]{Horn}}]\label{th2.1}
If $C$ is a nonnegative square matrix, then the following hold.
\begin{enumerate}
\item[(i)] The spectral radius $\rho(C)$ is an eigenvalue of $C$ with a corresponding nonnegative right eigenvector and a corresponding nonnegative left eigenvector.
\item[(ii)] If there exists a column vector $v> 0$ and a nonnegative number $\lambda$ such that $Cv\leq \lambda v$, then $\rho(C)\leq\lambda$.
\item[(iii)] If there exists a column vector $v\geq 0$, $v\not=0$  and a nonnegative number $\lambda$ such that  $Cv\geq \lambda v$, then  $\rho(C)\geq\lambda$.
\end{enumerate}
Moreover, if $C$ is irreducible, then
 the eigenvalue $\rho(C)$ in (i) has multiplicity $1$ and its corresponding left eigenvector and right eigenvector can be chosen to be positive, and any nonnegative left or right eigenvector of $C$ is only corresponding to the eigenvalue $\rho(C)$.
 \qed
\end{thm}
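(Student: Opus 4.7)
This is the classical Perron--Frobenius theorem. I would handle strictly positive $C$ first via a Collatz--Wielandt variational argument, then extend to general nonnegative $C$ by the perturbation $C_\varepsilon = C + \varepsilon J$ together with a compactness-plus-continuity argument, and finally deal with the irreducible addendum by a support-analysis argument.

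\textbf{Parts (i) and (ii).} For positive $C$, I would define $r(x) = \max\{\lambda \ge 0 : Cx \ge \lambda x\}$ on the standard simplex $\Delta = \{x \ge 0,\ \sum_i x_i = 1\}$. This function is upper semi-continuous, so it attains its maximum at some $x^* \in \Delta$; a small perturbation argument (moving $x^*$ slightly towards $Cx^*$) then shows $Cx^* = r(x^*)\,x^*$. For any eigenvalue $\mu$ of $C$ with eigenvector $w$, a componentwise comparison against $(|w_1|,\ldots,|w_n|)$ gives $|\mu| \le r(x^*)$, whence $r(x^*)=\rho(C)$. For a general nonnegative $C$, I would set $C_\varepsilon = C + \varepsilon J$, take unit-norm Perron eigenvectors $x_\varepsilon$, and extract a convergent subsequence as $\varepsilon \to 0$; continuity of $\rho(C_\varepsilon)$ in $\varepsilon$ delivers a nonnegative right eigenvector of $C$ for $\rho(C)$, and the same argument applied to $C^T$ produces the left eigenvector. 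For (ii), with $v>0$ and $Cv \le \lambda v$, I would pair with a nonnegative left eigenvector $u$ from (i) to obtain $\rho(C)\,u^T v = u^T C v \le \lambda\, u^T v$ with $u^T v > 0$ (since $v>0$ and $u \ne 0$), forcing $\rho(C) \le \lambda$.

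\textbf{Part (iii) and the irreducible case.} For (iii), I would iterate $Cv \ge \lambda v$ to obtain $C^k v \ge \lambda^k v$ componentwise for all $k \ge 0$, so $\|C^k v\| \ge \lambda^k \|v\|$ in any monotone norm; Gelfand's formula $\rho(C) = \lim_k \|C^k\|^{1/k}$ then yields $\rho(C) \ge \lambda$. For the irreducible addendum, suppose $w \ge 0$ satisfies $Cw = \rho(C)\,w$ and $S := \{i : w_i = 0\} \ne \emptyset$; expanding $(Cw)_i = 0$ for $i \in S$ forces $c_{ij} = 0$ for all $i \in S$ and $j \notin S$, contradicting irreducibility. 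Hence $w > 0$, and the same argument on the left produces a positive left eigenvector $u$. Any nonnegative eigenvector $w'$ with $Cw' = \mu w'$ then satisfies $\mu\,u^T w' = \rho(C)\,u^T w'$ with $u^T w' > 0$, so $\mu = \rho(C)$; and any two positive $\rho(C)$-eigenvectors $w,w'$ must be proportional, else $w - \alpha w'$ with $\alpha = \min_i w_i/w'_i$ would be a nonzero nonnegative $\rho(C)$-eigenvector with a zero entry, contradicting the positivity just established.

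\textbf{Main obstacle.} The delicate point is not any single computation but the limit passage from positive to nonnegative matrices in part (i): while $\rho(C_\varepsilon)$ depends continuously on $\varepsilon$, the eigenvectors $x_\varepsilon$ only admit a convergent subsequence, and one must verify that a limit point is still a \emph{nonzero} nonnegative eigenvector for the correct eigenvalue (nonzero because unit-norm vectors cannot degenerate in the limit, nonnegative by entrywise passage to the limit, and eigenvector for $\rho(C)$ by passing to the limit in $C_\varepsilon x_\varepsilon = \rho(C_\varepsilon)\,x_\varepsilon$). The Collatz--Wielandt step itself is short, but needs care near $\partial\Delta$ where individual ratios $(Cx)_i/x_i$ become undefined; the ``max over $\lambda$ such that $Cx \ge \lambda x$'' formulation chosen above is precisely to sidestep this.
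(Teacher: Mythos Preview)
The paper does not prove Theorem~2.1 at all: it is quoted from the references \cite{Brou,Horn} and closed immediately with \qed, so there is no ``paper's own proof'' to compare against. Your sketch is therefore not competing with anything in the paper; it is simply a self-contained outline of one standard route (Collatz--Wielandt for strictly positive matrices, then the perturbation $C_\varepsilon=C+\varepsilon J$ and compactness, Gelfand for part~(iii), and a support/block argument for the irreducible addendum). That outline is sound, and the obstacle you flag---that the limit of unit Perron vectors remains nonzero and is an eigenvector for the right eigenvalue---is exactly the point that needs a sentence of care.

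One genuine gap: in the irreducible addendum the statement asserts that $\rho(C)$ has \emph{multiplicity~$1$}, which in the Perron--Frobenius context means \emph{algebraic} simplicity. Your $w-\alpha w'$ argument only yields geometric simplicity (the eigenspace is one-dimensional). To finish you must also rule out a Jordan block of size $\ge 2$ at $\rho(C)$; a clean way is to suppose $(C-\rho(C)I)y=w$ for the positive Perron vector $w$ and pair with the positive left Perron vector $u^T$ to get $0=u^T(C-\rho(C)I)y=u^Tw>0$, a contradiction. Alternatively one differentiates $\det(xI-C_\varepsilon)$ and uses the positive left/right Perron vectors to show the derivative at $\rho(C)$ is nonzero. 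Either step is short, but without it the ``multiplicity~$1$'' clause is not established.
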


Unless specified otherwise, by eigenvector we always mean the right eigenvector.
The following two lemmas are  well-known consequences of Theorem~\ref{th2.1}. We provide their proofs since they motivate our  proofs of results.

\begin{lem}[{\cite[Theorem 2.2.1]{Brou}}]\label{le2.2}
If $0\leq C\leq C'$ are square matrices, then $\rho(C)\leq\rho(C')$. Moreover, if $C'$ is irreducible,
then $\rho(C')=\rho(C)$  if and only if $C'=C$.
\end{lem}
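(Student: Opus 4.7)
The plan is to deduce both assertions from Theorem~\ref{th2.1}. The first inequality follows directly from part (i) and part (iii). Specifically, I would apply part~(i) to $C$ to extract a nonnegative right eigenvector $w\neq 0$ with $Cw=\rho(C)w$, and then use $0\le C\le C'$ componentwise to get
\[
C'w\;\geq\;Cw\;=\;\rho(C)\,w.
\]
Since $w\geq 0$ and $w\neq 0$, part~(iii) of Theorem~\ref{th2.1} (applied to $C'$) yields $\rho(C')\geq \rho(C)$.

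For the ``moreover'' direction, assume $C'$ is irreducible and $\rho(C')=\rho(C)$. The strategy is to bootstrap the vector $w$ above into a positive eigenvector of $C'$ and then compare $Cw$ with $C'w$. Let $u>0$ be the positive left eigenvector of $C'$ guaranteed by the irreducibility part of Theorem~\ref{th2.1}, so that $u^TC'=\rho(C')u^T$. From the inequality $C'w-\rho(C')w\geq 0$ derived above (using $\rho(C)=\rho(C')$), multiplying on the left by $u^T$ gives
\[
0\;\leq\;u^T\bigl(C'w-\rho(C')w\bigr)\;=\;\rho(C')u^Tw-\rho(C')u^Tw\;=\;0.
\]
Because $u>0$ and $C'w-\rho(C')w\geq 0$ coordinatewise, each coordinate of $C'w-\rho(C')w$ must vanish, so $w$ is a nonnegative eigenvector of $C'$ for $\rho(C')$. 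By the irreducibility clause of Theorem~\ref{th2.1}, such a $w$ is in fact strictly positive.

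Once $w>0$ is known, the conclusion is immediate: we have $Cw=\rho(C)w=\rho(C')w=C'w$, hence $(C'-C)w=0$, and since $C'-C\geq 0$ while every entry of $w$ is positive, every entry of $C'-C$ must be zero, giving $C'=C$. The main obstacle to anticipate is precisely the transition from ``$w$ is a nonnegative eigenvector of $C$'' to ``$w$ is a positive eigenvector of $C'$''; without irreducibility of $C'$ the argument collapses, which is why the hypothesis cannot be weakened. The auxiliary trick of pairing $C'w-\rho(C')w\geq 0$ with the strictly positive left Perron vector $u$ of $C'$ is the key step that makes this transition work.
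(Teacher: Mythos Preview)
Your proof is correct and follows essentially the same approach as the paper's: both use a nonnegative right eigenvector of $C$ together with the positive left Perron eigenvector of the irreducible $C'$ to squeeze the inequality $C'w\ge Cw$ into an equality, and then use positivity of $w$ to conclude $C'=C$. The only cosmetic difference is the order of conclusions---you first show $C'w=\rho(C')w$ and then deduce $C'w=Cw$, whereas the paper first derives $C'v=Cv$ and then observes this makes $v$ an eigenvector of $C'$.
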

\begin{proof}
 Let $v$ be  a nonnegative eigenvector of $C$ for $\rho(C)$. From the assumption,  $C'v\geq Cv=\rho(C)v$.
 By applying Theorem~\ref{th2.1}(iii) with $(C, \lambda)=(C', \rho(C))$, we have  $\rho(C')\geq \rho(C).$
   Clearly $C'=C$ implies $\rho(C')=\rho(C)$.
 If $\rho(C')=\rho(C)$ and $C'$ is irreducible, then
 $\rho(C)v'^Tv=\rho(C') v'^Tv=v'^TC'v\geq v'^TCv= \rho(C)v'^Tv,$
 where $v'^T$ is a positive left eigenvector of $C'$ for $\rho(C').$
 Hence $v'^TC'v=v'^TCv$.  As $v'^T$ is positive,
  $C'v=Cv$ and $\rho(C')v=\rho(C)v=Cv=C'v.$
Since $v$ is a nonnegative eigenvector of irreducible nonnegative  matrix $C'$,
 $v$ is positive and $C'=C.$
 \end{proof}

The matrix $C'$ in Lemma~\ref{le2.2} is a {\it matrix realization} of the upper bound $\rho(C')$ of $\rho(C)$ as stated in the title. We shall provide other matrix realizations. The next one is via a $1\times 1$ matrix.

\begin{lem}[{\cite[Theorem 8.1.22]{Horn}}]\label{le2.3}
If an $n\times n$ matrix $C=(c_{ij})$ is nonnegative with row-sum vector $(r_1,r_2,\ldots,r_n)$ satisfying $r_1\geq r_i\geq r_n$ for $1\leq i\leq n$,
then
$$r_n \leq \rho(C) \leq r_1.$$
Moreover, if $C$ is irreducible, then $\rho(C)=r_1$ (resp.  $\rho(C)=r_n$) if and only if $C$ has constant row-sum.
\end{lem}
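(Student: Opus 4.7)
The plan is to mimic the proof of Lemma~\ref{le2.2}, but taking the all-ones column vector $\mathbf{1}=(1,1,\ldots,1)^T$ in place of the Perron eigenvector of a comparison matrix. The key observation is that $C\mathbf{1}$ is precisely the row-sum vector $(r_1,r_2,\ldots,r_n)^T$, so the hypothesis $r_n\leq r_i\leq r_1$ immediately translates into the componentwise inequalities $r_n\mathbf{1}\leq C\mathbf{1}\leq r_1\mathbf{1}$.

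For the upper bound, I would apply Theorem~\ref{th2.1}(ii) with $v=\mathbf{1}>0$ and $\lambda=r_1$, which is legitimate because $\mathbf{1}$ is strictly positive; this yields $\rho(C)\leq r_1$. For the lower bound, I would apply Theorem~\ref{th2.1}(iii) with $v=\mathbf{1}\geq 0$, $v\neq 0$ and $\lambda=r_n$, which is legitimate provided $r_n\geq 0$ (this follows from nonnegativity of $C$); this yields $\rho(C)\geq r_n$.

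For the moreover part, assume $C$ is irreducible. One direction is trivial: if $C$ has constant row-sum $r$, then $C\mathbf{1}=r\mathbf{1}$ shows $r$ is an eigenvalue with a nonnegative eigenvector; by Theorem~\ref{th2.1}, $r=\rho(C)$, and in this situation $r=r_1=r_n$, so both equalities hold. For the converse, suppose $\rho(C)=r_1$, and let $v'^T$ be a positive left eigenvector for $\rho(C)$, which exists by the irreducibility clause of Theorem~\ref{th2.1}. Then the chain
\[
r_1\, v'^T\mathbf{1}=\rho(C)\,v'^T\mathbf{1}=v'^TC\mathbf{1}\leq r_1\,v'^T\mathbf{1}
\]
must be an equality, and since $v'^T>0$ and $r_1\mathbf{1}-C\mathbf{1}\geq 0$, we conclude $C\mathbf{1}=r_1\mathbf{1}$, i.e.\ $C$ has constant row-sum $r_1$. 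The case $\rho(C)=r_n$ is entirely analogous, using the same left eigenvector and the inequality $C\mathbf{1}\geq r_n\mathbf{1}$.

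The argument is essentially routine from Perron--Frobenius; the only mild subtlety is ensuring that the hypotheses of parts (ii) and (iii) of Theorem~\ref{th2.1} are really met (strict positivity of $\mathbf{1}$ for (ii), and nonnegativity of $\lambda=r_n$ for (iii)), and handling the equality case by the standard pairing-with-a-positive-left-eigenvector trick already used in the proof of Lemma~\ref{le2.2}. I do not anticipate a serious obstacle.
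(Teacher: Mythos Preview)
Your argument is correct. The paper does not prove Lemma~\ref{le2.3} directly; instead it establishes the generalization Lemma~\ref{le2.4} by taking a nonnegative left eigenvector $v^T$ for $\theta$, normalizing so that $\sum_i v_i=1$, and observing that $\theta=v^TC\mathbf{1}=\sum_i v_i r_i$ is a convex combination of the row sums, which immediately gives $r_n\le\theta\le r_1$ together with the equality characterization. Lemma~\ref{le2.3} then follows by specializing $\theta=\rho(C)$ via Theorem~\ref{th2.1}(i).

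Your route is different in that you use the all-ones vector $\mathbf{1}$ as a test vector in Theorem~\ref{th2.1}(ii),(iii) for the two inequalities, and only bring in the positive left eigenvector for the equality discussion (mimicking the proof of Lemma~\ref{le2.2}). Both arguments are short and standard; the paper's convex-combination viewpoint has the advantage that it applies verbatim to any real matrix possessing a nonnegative left eigenvector (this is exactly Lemma~\ref{le2.4}), whereas your use of Theorem~\ref{th2.1}(ii),(iii) relies on $C$ being nonnegative. Conversely, your approach is slightly more self-contained for the pure Perron--Frobenius setting, since it avoids the normalization step and reads the bounds off directly from $r_n\mathbf{1}\le C\mathbf{1}\le r_1\mathbf{1}$.
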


We provide a proof of the following generalized version of Lemma~\ref{le2.3}, which is due to  Ellingham and Zha \cite{ez:00}.

\begin{lem}[\cite{ez:00}]\label{le2.4}
If an $n\times n$ matrix $C$ (not necessary to be nonnegative) with row-sum vector $(r_1,r_2,\ldots,r_n)$, where $r_1\geq r_i\geq r_n$ for $1\leq i\leq n$, has a nonnegative left eigenvector $v^T=(v_1,v_2,\ldots,v_n)$ for $\theta$, then
$$r_n \leq \theta \leq r_1.$$
Moreover, $\theta=r_1$ (resp. $\theta=r_n$) if and only if $r_i=r_1$ (resp. $r_i=r_n$) for the indices $i$ with $v_i\ne 0$.
In particular, if $v^T$ is positive, $\theta=r_1$ (resp. $\theta=r_n$) if and only if $C$ has constant row-sum.
\end{lem}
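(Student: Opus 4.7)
The plan is to exploit the all-ones vector $\mathbf{1}=(1,1,\ldots,1)^T$, which converts the row-sum information into the vector identity $C\mathbf{1} = (r_1, r_2, \ldots, r_n)^T$. Since $v^T$ is a left eigenvector of $C$ for $\theta$, multiplying $v^T C = \theta v^T$ on the right by $\mathbf{1}$ yields the scalar equation $\sum_{i=1}^n v_i r_i = \theta \sum_{i=1}^n v_i$.

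Next I would bound the left-hand side two ways. Because each $v_i \geq 0$ and $r_n \leq r_i \leq r_1$, termwise inequalities give
$$r_n \sum_{i=1}^n v_i \leq \sum_{i=1}^n v_i r_i \leq r_1 \sum_{i=1}^n v_i.$$
Since $v^T$ is a nonzero nonnegative vector, $\sum_i v_i > 0$, so dividing through gives $r_n \leq \theta \leq r_1$, which is the main inequality.

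For the equality characterization, I would rewrite the upper bound as $\sum_{i=1}^n v_i(r_1 - r_i) = (r_1 - \theta)\sum_i v_i$. Each summand $v_i(r_1 - r_i)$ is nonnegative, so the sum vanishes precisely when $v_i(r_1 - r_i) = 0$ for every $i$, i.e. $r_i = r_1$ whenever $v_i \ne 0$. An identical argument with $\sum_i v_i(r_i - r_n)$ handles the case $\theta = r_n$. The ``in particular'' clause is then immediate: if $v^T > 0$, the condition ``$r_i = r_1$ whenever $v_i \ne 0$'' forces $r_i = r_1$ for all $i$, meaning $C$ has constant row-sum, and conversely constant row-sum makes $\mathbf{1}$ a right eigenvector with eigenvalue $r_1$, consistent with $\theta = r_1$.

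I do not anticipate a genuine obstacle here: the argument is a one-line Rayleigh-type pairing against $\mathbf{1}$ followed by tracking the equality case in a sum of nonnegative terms. The only subtlety worth stating explicitly is that $v\ne 0$ and $v \geq 0$ together guarantee $\sum_i v_i > 0$, which is needed to divide. Unlike Lemma~\ref{le2.3}, we do not need $C$ to be nonnegative or irreducible, because Perron--Frobenius is not invoked; the existence of the nonnegative left eigenvector is built into the hypothesis.
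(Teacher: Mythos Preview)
Your proof is correct and is essentially the same as the paper's: both compute $v^T C\mathbf{1}=\theta\, v^T\mathbf{1}$ to express $\theta$ as a weighted average (convex combination) of the $r_i$ with nonnegative weights $v_i$, and read off the bounds and equality cases from that. The paper just normalizes $\sum_i v_i=1$ up front and phrases the conclusion as ``$\theta$ is a convex combination of the $r_i$,'' whereas you keep $\sum_i v_i$ explicit and divide at the end.
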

\begin{proof}
Without loss of generality, let $\sum_{i=1}^{n}v_i=1$. Then
$$\theta=\theta v^TJ_{n\times 1}=v^TCJ_{n\times 1}=\sum_{i=1}^nv_ir_i.$$
So $\theta$ is a convex combination of those $r_i$ with indices $i$ satisfying $v_i> 0$, and the result follows.
\end{proof}

Let $\Pi=\{\pi_1, \pi_2, \ldots, \pi_\ell\}$  be a partition of $[n]$ and let
 $C$ be an $n\times n$ matrix.
We define an  $\ell\times \ell$ matrix $\Pi(C):=(p_{ab})$, where
$p_{ab}$ equals the average row-sum of the submatrix $C[\pi_a|\pi_b]$ of $C$.
 In matrix notation,
\begin{equation}\label{e2.1}
\Pi(C)=(S^TS)^{-1}S^TCS,
\end{equation}
where $S=(s_{jb})$ is the $n\times \ell$ {\it characteristic matrix} of $\Pi$, i.e.,
$$s_{jb}=
\left\{
\begin{array}{ll}
1, & \hbox{if $j\in \pi_b$;} \\
0, & \hbox{otherwise} \\
\end{array}
\right.$$
for $1\leq j\leq n$ and $1\leq b\leq \ell.$
The matrix $\Pi(C)$ is referred to as  the {\it quotient matrix} of $C$ with respect to $\Pi$.
Moreover if
$$p_{ab}=\sum_{j\in \pi_b} c_{ij}\qquad (1\leq a, b\leq \ell)$$
holds for every $i\in \pi_a,$ then  $\Pi$ is called an {\it equitable partition} of $C$, and
$\Pi(C)$ is called an {\it equitable quotient matrix} of $C$.
Note that $\Pi$ is an equitable partition of $C$ if and only if
\begin{equation}\label{e2.2}S\Pi(C)=CS.\end{equation}
Similarly for a column vector $u=(u_1, u_2,\ldots, u_n)^T$, we define a vector $\Pi(v)=(p_a)$ of length $\ell$, where
$p_a$ equals the average value of entries of $v$ with indices in $\pi_a$. If $u=S\Pi(u)$ then $\Pi$ is an {\it equitable partition} of $u$,
 and $\Pi(u)$ is called an {\it equitable quotient vector} of $u$.

\begin{lem}[{\cite[Lemma 2.3.1]{Brou}}]\label{le2.5}
If an $n\times n$ matrix $C$ has an equitable partition $\Pi=\{\pi_1, \pi_2, \ldots, \pi_\ell\}$ with characteristic matrix $S$,  and $\lambda$ is an eigenvalue  of $\Pi(C)$ with eigenvector $u$,   then   $\lambda$ is an  eigenvalue of $C$ with eigenvector $Su$.
\end{lem}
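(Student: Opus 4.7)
The plan is to use the equitability identity $S\Pi(C)=CS$ from equation~(\ref{e2.2}) as a single-line bridge between the two eigen-equations. Starting from $\Pi(C)u=\lambda u$, I would left-multiply by the characteristic matrix $S$ to obtain
\[
S\Pi(C)u=\lambda Su,
\]
and then rewrite the left-hand side using (\ref{e2.2}) as $CSu$. This gives $C(Su)=\lambda (Su)$, which is exactly the desired eigen-equation for $C$.

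The only thing that needs attention is that $Su$ must be nonzero; otherwise $Su$ would not qualify as an eigenvector. Here I would observe that the characteristic matrix $S$ of a partition has exactly one $1$ in each row (with the remaining entries $0$), so its $\ell$ columns are linearly independent (indeed, they have pairwise disjoint supports). Thus $S$ has full column rank $\ell$, which forces $Su\neq 0$ whenever the eigenvector $u\neq 0$.

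There is no real obstacle in this argument; the content of the lemma is already packaged into equation~(\ref{e2.2}), and the proof amounts to recognizing that multiplying an eigen-equation on one side by a full-column-rank matrix transports it correctly. If I wanted to be more explicit, I could even write $Su$ in coordinates to emphasize that its value on block $\pi_a$ equals the $a$th coordinate $u_a$ of $u$, making the eigenvector structure transparent.
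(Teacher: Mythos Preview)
Your argument is correct and is exactly the standard one-line proof via the equitability identity~(\ref{e2.2}); the paper itself does not supply a proof of Lemma~\ref{le2.5} but merely cites \cite[Lemma~2.3.1]{Brou}, so there is nothing further to compare. Your added remark that $S$ has full column rank (hence $Su\neq 0$) is the right way to complete the eigenvector claim.
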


The following are some useful properties of equitable quotient matrices.

\begin{prop}[{\cite[Corollary 5.2.3]{g:93}}]\label{pro2.6}
If $C$ is an $n\times n$ nonnegative matrix and $\Pi$ is an equitable partition of $C$, then $\rho(C)=\rho(\Pi(C))$. \qed
\end{prop}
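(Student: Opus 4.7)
The plan is to establish the two inequalities $\rho(\Pi(C))\le \rho(C)$ and $\rho(\Pi(C))\ge \rho(C)$ separately by exhibiting a suitable nonnegative eigenvector on each side.

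For $\rho(\Pi(C))\le \rho(C)$, I first observe that $\Pi(C)$ is itself nonnegative, since each of its entries is an average of row-sums of a submatrix of the nonnegative matrix $C$. Theorem~\ref{th2.1}(i) then supplies a nonzero nonnegative column vector $u$ with $\Pi(C)u=\rho(\Pi(C))u$. Lemma~\ref{le2.5} immediately yields that $\rho(\Pi(C))$ is an eigenvalue of $C$, with eigenvector $Su$ (which is nonzero because $u\ne 0$ and each column of $S$ is the characteristic vector of a nonempty part). By the definition of the spectral radius, this forces $\rho(\Pi(C))\le \rho(C)$.

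For the reverse inequality I pass to a \emph{left} Perron eigenvector of $C$. By Theorem~\ref{th2.1}(i) there is a nonzero nonnegative row vector $v^{T}$ with $v^{T}C=\rho(C)\,v^{T}$. Multiplying this identity on the right by the characteristic matrix $S$ and invoking the equitable relation $CS=S\Pi(C)$ from~(\ref{e2.2}) gives
\[
(v^{T}S)\,\Pi(C)\;=\;v^{T}CS\;=\;\rho(C)\,(v^{T}S).
\]
The $b$-th entry of $v^{T}S$ is $\sum_{i\in\pi_{b}}v_{i}$, and these entries are nonnegative and not all zero, since $v\ge 0$ with $v\ne 0$ and every block $\pi_{b}$ is nonempty. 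Hence $v^{T}S$ is a nonzero nonnegative left eigenvector of $\Pi(C)$ for the eigenvalue $\rho(C)$, so $\rho(C)\le \rho(\Pi(C))$.

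I do not foresee a substantive obstacle in this argument: the core identity is the equitable relation $CS=S\Pi(C)$, which transports right eigenvectors of $\Pi(C)$ to right eigenvectors of $C$ via multiplication by $S$ on the left, and left eigenvectors of $C$ to left eigenvectors of $\Pi(C)$ via multiplication by $S$ on the right. The only point to be verified is the non-vanishing of the projected vectors $Su$ and $v^{T}S$, and that is immediate from the assumption that every part of $\Pi$ is nonempty.
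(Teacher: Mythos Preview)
Your proof is correct. The paper does not supply its own proof of this proposition---it is stated with a \qed and attributed to Godsil, with the subsequent remark noting only that the symmetric $(0,1)$ case treated there extends by ``the same idea'' to general nonnegative matrices. Your argument is precisely that natural extension: Lemma~\ref{le2.5} lifts a Perron eigenvector of $\Pi(C)$ to one of $C$, giving $\rho(\Pi(C))\le\rho(C)$, while the equitable relation~(\ref{e2.2}) pushes a left Perron eigenvector of $C$ forward to one of $\Pi(C)$ via $v^{T}\mapsto v^{T}S$, giving the reverse inequality. Both non-vanishing checks are handled correctly.
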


\begin{rem}
In  \cite[Corollary 5.2.3]{g:93}, the author only considered symmetric $(0,1)$-matrices, but the same idea proves the general case.
\end{rem}

Let $I_n$ denote the $n\times n$ identity matrix.

\begin{prop}\label{prop2.8}
Let $\Pi$ be an equitable partition of $n\times n$ matrices $C_1$ and $C_2$, then $\Pi$ is also an equitable partition of $C_1C_2$ and
$$\Pi(C_1C_2)=\Pi(C_1)\Pi(C_2).$$
In particular, if $C_1$ is invertible and $\Pi$ is an equitable partition of $C_1^{-1}$, then $\Pi(C_1^{-1})=\Pi(C_1)^{-1}$.
\end{prop}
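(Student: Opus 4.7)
The plan is to use the matrix characterization of equitability given in equation \eqref{e2.2}, namely that $\Pi$ is an equitable partition of a matrix $C$ if and only if $CS = S\Pi(C)$, where $S$ is the $n\times \ell$ characteristic matrix of $\Pi$. This reduces the proposition to a short algebraic manipulation.

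First I would handle the product. Since $\Pi$ is equitable for both $C_1$ and $C_2$, we have $C_1 S = S\Pi(C_1)$ and $C_2 S = S\Pi(C_2)$. Multiplying and substituting,
\begin{equation*}
(C_1 C_2) S \;=\; C_1 (C_2 S) \;=\; C_1 S\, \Pi(C_2) \;=\; S\, \Pi(C_1)\Pi(C_2).
\end{equation*}
Thus $(C_1 C_2) S$ lies in the column span of $S$ in the form demanded by \eqref{e2.2}, which shows $\Pi$ is equitable for $C_1 C_2$. To identify the quotient matrix, I would premultiply by $(S^T S)^{-1} S^T$ (which exists because the columns of $S$, being indicators of disjoint nonempty blocks, are linearly independent) and invoke formula \eqref{e2.1}:
\begin{equation*}
\Pi(C_1 C_2) \;=\; (S^T S)^{-1} S^T (C_1 C_2) S \;=\; (S^T S)^{-1} S^T S\, \Pi(C_1)\Pi(C_2) \;=\; \Pi(C_1)\Pi(C_2).
\end{equation*}

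For the inverse statement, the first observation is that $\Pi$ is trivially an equitable partition of $I_n$ with $\Pi(I_n) = I_\ell$, since $I_n S = S = S I_\ell$. Applying the product formula just established to the identity $C_1 \cdot C_1^{-1} = I_n$ (which is legitimate because $\Pi$ is assumed equitable for both factors) gives
\begin{equation*}
\Pi(C_1)\,\Pi(C_1^{-1}) \;=\; \Pi(C_1 C_1^{-1}) \;=\; \Pi(I_n) \;=\; I_\ell,
\end{equation*}
so $\Pi(C_1)$ is invertible with inverse $\Pi(C_1^{-1})$. A symmetric computation using $C_1^{-1} C_1 = I_n$ confirms this on the other side.

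I do not expect a serious obstacle here: the whole argument is bookkeeping once the algebraic identity \eqref{e2.2} is in hand. The only point requiring a brief justification is the invertibility of $S^T S$ used to pass from $(C_1C_2)S = S\,\Pi(C_1)\Pi(C_2)$ to the explicit equality $\Pi(C_1C_2) = \Pi(C_1)\Pi(C_2)$, and this follows immediately from the fact that $S$ has rank $\ell$.
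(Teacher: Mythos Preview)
Your proof is correct and follows essentially the same approach as the paper: both arguments use the characterization \eqref{e2.2} together with the formula \eqref{e2.1} to reduce everything to the computation $(C_1C_2)S = C_1S\Pi(C_2) = S\Pi(C_1)\Pi(C_2)$, and then handle the inverse via $\Pi(C_1)\Pi(C_1^{-1}) = \Pi(I_n) = I_\ell$. The only cosmetic difference is that the paper first derives $\Pi(C_1C_2)=\Pi(C_1)\Pi(C_2)$ and then checks equitability, whereas you do these in the opposite order.
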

\begin{proof}
From (\ref{e2.2}), we have $S\Pi(C_1)=C_1S$ and $S\Pi(C_2)=C_2S$, where $S$ is the characteristic matrix of $\Pi$.
By (\ref{e2.1}),
$$\Pi(C_1)\Pi(C_2)=(S^TS)^{-1}S^TC_1S\Pi(C_2)=(S^TS)^{-1}S^T_1C_1C_2S=\Pi(C_1C_2).$$
Hence $$C_1C_2S=C_1S\Pi(C_2)=S\Pi(C_1)\Pi(C_2)=S\Pi(C_1C_2).$$
By (\ref{e2.2}) again, $\Pi$ is an equitable partition of $C_1C_2$. The second part follows from $\Pi(C_1)\Pi(C_1^{-1})=\Pi(C_1C_1^{-1})=\Pi(I_n)=I_\ell$, where $\ell=|\Pi|$.
\end{proof}

\section{Spectral bound via a same size matrix}\label{s3}

In this section we develop the key tool in this paper.

\subsection{A generalization of Lemma~~\ref{le2.2}}\label{s3.1}

We generalize Lemma~\ref{le2.2} in the sense of Lemma~\ref{le2.4} that the matrices considered are not necessarily nonnegative.
\begin{lem}\label{le3.1}
 Let $C=(c_{ij})$, $C'=(c'_{ij})$, $P$ and $Q$ be  $n\times n$ matrices.
Assume that
\begin{enumerate}
\item[(i)]    $PCQ\leq PC'Q$;
\item[(ii)]  $C'$ has an eigenvector $Qu$ for $\lambda'$, where $u$ is a nonnegative column vector  and $\lambda'\in \mathbb{R}$;
\item[(iii)] $C$ has a left eigenvector $v^TP$ for $\lambda$, where  $v^T$ is a nonnegative row vector and  $\lambda\in \mathbb{R}$; and
\item[(iv)] $v^TPQu>0.$
\end{enumerate}
 Then $\lambda\leq \lambda'$.
Moreover, $\lambda=\lambda'$
if and only if
\begin{equation}\label{e3.1}
(PC'Q)_{ij}=(PCQ)_{ij}\qquad \hbox{for~}1\leq i, j\leq n \hbox{~with~} v_i\ne 0 \hbox{~and~} u_j\ne 0.
\end{equation}
\end{lem}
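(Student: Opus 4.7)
The plan is to extract the inequality from a single scalar identity obtained by sandwiching $PCQ$ (and $PC'Q$) between $v^T$ on the left and $u$ on the right. Because (iii) says $v^T P$ is a left eigenvector of $C$ for $\lambda$ and (ii) says $Qu$ is a right eigenvector of $C'$ for $\lambda'$, one gets two expressions for the quantities $v^T(PCQ)u$ and $v^T(PC'Q)u$, namely
\[
v^T(PCQ)u = (v^TPC)Qu = \lambda\,v^TPQu,\qquad v^T(PC'Q)u = v^TP(C'Qu) = \lambda'\,v^TPQu.
\]

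Next I would use (i) together with $v\geq 0$, $u\geq 0$ to bound the first of these by the second: entrywise $PCQ \le PC'Q$ forces
\[
\lambda\,v^TPQu \;=\; v^T(PCQ)u \;\le\; v^T(PC'Q)u \;=\; \lambda'\,v^TPQu,
\]
and dividing by the strictly positive scalar $v^TPQu$ supplied by (iv) gives $\lambda \le \lambda'$. Note that we only need $v,u$ nonnegative for the inequality; the strict positivity of the whole inner product comes from assumption (iv), which is exactly the hypothesis that makes this argument robust (it is the one subtle ingredient and the analogue of the positivity used in Lemma~\ref{le2.2}).

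For the equality clause, observe that $\lambda=\lambda'$ is equivalent (again using (iv)) to $v^T(PC'Q - PCQ)u = 0$. Writing this out,
\[
\sum_{i,j} v_i\bigl((PC'Q)_{ij} - (PCQ)_{ij}\bigr) u_j = 0,
\]
where every summand is nonnegative by (i) and the signs of $v$ and $u$. A sum of nonnegative reals vanishes iff each term vanishes, and the term at $(i,j)$ vanishes iff $v_i = 0$, or $u_j = 0$, or $(PC'Q)_{ij} = (PCQ)_{ij}$. This is precisely the condition displayed in (\ref{e3.1}), finishing the characterization.

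The argument is essentially the classical Perron--Frobenius sandwich trick adapted to the matrices $P$ and $Q$. The only nontrivial step is organizational rather than computational: recognizing that assumption (iv) is exactly what turns the scalar inequality into a bound on $\lambda$, and noting that the sign hypotheses on $u$ and $v^T$ (rather than on $C,C'$) are enough because the componentwise inequality (i) is imposed on $PCQ$ and $PC'Q$ and not on $C,C'$ themselves. I do not anticipate any real obstacle.
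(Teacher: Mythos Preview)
Your proof is correct and follows essentially the same approach as the paper: both sandwich the entrywise inequality $PCQ\le PC'Q$ between the nonnegative vectors $v^T$ and $u$, use the eigenvector relations to rewrite the two scalars as $\lambda\,v^TPQu$ and $\lambda'\,v^TPQu$, and then divide by the positive quantity from (iv). Your treatment of the equality case via the vanishing of a nonnegative double sum is likewise the same argument the paper gives, just presented slightly more compactly.
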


\begin{proof}
Multiplying the nonnegative vector $u$ given in (ii) to the right of both terms of  (i), we have
\begin{equation}\label{e3.2}
PCQu\leq PC'Qu=\lambda'PQu.
\end{equation}
Multiplying the nonnegative row vector $v^T$ given in (iii) to the left of all terms  in (\ref{e3.2}), we have
\begin{equation}\label{e3.3}\lambda v^TPQu=v^TPCQu\leq v^TPC'Qu=\lambda' v^TPQu.\end{equation}
Now we delete the positive term $v^TPQu$ to obtain $\lambda\leq \lambda'$ and finish the proof of the first part.

 Assume that $\lambda=\lambda'$. Then the inequality in (\ref{e3.3}) is an equality.  Especially $(PCQu)_i=(PC'Qu)_i$
for any $i$ with $v_i\not=0.$ Hence $(PCQ)_{ij}=(PC'Q)_{ij}$ for  any $i,j$ with $v_i\not=0$ and  $u_j\not=0.$

Conversely, (\ref{e3.1}) implies
$v^TPCQu=v^TPC'Qu.$
Then $\lambda=\lambda'$ by (\ref{e3.3}).
\end{proof}

If $C$ is nonnegative and $P=Q=I_n$, then Lemma~\ref{le3.1} becomes Lemma~\ref{le2.2}
with an additional assumption $v^Tu>0$ which immediately holds if $C$ or $C'$ is irreducible by Theorem~\ref{th2.1}.

In the sequels, we shall call two statements  that resemble each other by switching $\leq$ and $\geq$ and corresponding variables, like
$\theta\geq r_n$ and $\theta\leq r_1$, as {\it dual statements}. Two proofs are called {\it dual proofs} if one proof is obtained from the other  by simply switching each of $\leq$ and $\geq$ to the other. The following is a dual version of Lemma~\ref{le3.1} which is proved by a dual proof.

\begin{lem}\label{le3.2}
 Let $C=(c_{ij})$, $C'=(c'_{ij})$, $P$ and $Q$ be  $n\times n$ matrices.
Assume that
\begin{enumerate}
\item[(i)]    $PCQ\geq PC'Q$;
\item[(ii)]  $C'$ has an eigenvector $Qu$ for $\lambda'$, where $u$ is a  nonnegative column vector  and $\lambda'\in \mathbb{R}$;
\item[(iii)] $C$ has a left eigenvector $v^TP$ for $\lambda$, where $v^T$   is a nonnegative row vector and  $\lambda\in \mathbb{R}$; and
\item[(iv)] $v^TPQu>0.$
\end{enumerate}
 Then $\lambda\geq \lambda'$.
Moreover, $\lambda=\lambda'$
if and only if
$$
(PC'Q)_{ij}=(PCQ)_{ij}\qquad \hbox{for~}1\leq i, j\leq n~ \hbox{~with~} v_i\ne 0 \hbox{~and~} u_j\ne 0.
$$\qed
\end{lem}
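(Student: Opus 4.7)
The plan is to mirror the proof of Lemma~\ref{le3.1} almost verbatim, systematically switching $\leq$ for $\geq$. Starting from hypothesis (i), $PCQ \geq PC'Q$, I would right-multiply by the nonnegative column vector $u$ from (ii); because $u \geq 0$, the inequality direction is preserved, giving $PCQu \geq PC'Qu = \lambda' PQu$, where the last equality uses that $Qu$ is an eigenvector of $C'$ for $\lambda'$. Next, I would left-multiply this chain by the nonnegative row vector $v^T$ from (iii); again nonnegativity preserves the direction, producing
\begin{equation*}
\lambda\, v^T P Q u \;=\; v^T P C Q u \;\geq\; v^T P C' Q u \;=\; \lambda'\, v^T P Q u,
\end{equation*}
where the first equality uses that $v^TP$ is a left eigenvector of $C$ for $\lambda$. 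Hypothesis (iv) then lets me cancel the positive scalar $v^T P Q u$ and conclude $\lambda \geq \lambda'$.

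For the equality case, if $\lambda = \lambda'$ then the displayed middle inequality is tight, i.e.\ $v^T P C Q u = v^T P C' Q u$. Writing this as $\sum_i v_i \big((PCQu)_i - (PC'Qu)_i\big) = 0$ with all summands nonnegative (by (i) and $u \geq 0$) forces $(PCQu)_i = (PC'Qu)_i$ for every $i$ with $v_i \neq 0$. Expanding one more level, each such row equation reads $\sum_j \big((PCQ)_{ij} - (PC'Q)_{ij}\big) u_j = 0$ with nonnegative summands, so $(PCQ)_{ij} = (PC'Q)_{ij}$ for all $j$ with $u_j \neq 0$. The converse is immediate: the stated coincidence of entries yields $v^T P C Q u = v^T P C' Q u$, and dividing by $v^T P Q u > 0$ in the chain above gives $\lambda = \lambda'$.

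There is really no substantive obstacle here; the single point requiring care is that the two sandwich vectors $u$ and $v^T$ are nonnegative, which is exactly what (ii) and (iii) guarantee, and that $v^T P Q u$ is strictly positive so the cancellation is legitimate, which is exactly (iv). The argument is formally the dual proof promised in the remark following Lemma~\ref{le3.1}.
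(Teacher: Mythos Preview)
Your proposal is correct and is precisely the dual proof the paper invokes: the paper does not write out a separate argument for Lemma~\ref{le3.2} but simply declares it proved by switching $\leq$ and $\geq$ in the proof of Lemma~\ref{le3.1}, which is exactly what you have done.
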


\subsection{The special case $P=I_n$ and a particular $Q$}\label{s3.2}

 The following matrix notation will be adopted in the paper. For a matrix $C=(c_{ij})$ and subsets $\alpha$, $\beta$ of row indices and column indices respectively, we use $C[\alpha|\beta]$ to denote the submatrix of $C$ with size $|\alpha|\times |\beta|$ that has entries $c_{ij}$ for $i\in \alpha$ and $j\in\beta$, and define
$C[\alpha|\beta):=C[\alpha|\overline{\beta}],$ where $\overline{\beta}$ is the complement of $\beta$ in the set of column indices.
We define $C(\alpha|\beta]$ and $C(\alpha|\beta)$ similarly.
We use  $i$ to denote the subset $[i]=\{1, 2, \ldots, i\}$ to reduce the double use of parentheses. For example, $C[i|i]:=C[[i]|[i]]$ and $C[i|i):=C[[i]|[i]).$

We shall apply Lemma~\ref{le3.1} and Lemma~\ref{le3.2} by letting $P=I_n$ and
\begin{equation}\label{e3.4}
Q=I_n+\sum_{i=1}^{n-1}E_{in}=\begin{pmatrix}
I_{n-1}& J_{(n-1) \times 1}\\
  O_{1\times (n-1)} & 1 \\
\end{pmatrix}.
\end{equation}
Hence for $n\times n$ matrix $C'=(c'_{ij}),$ the matrix $PC'Q$ in Lemma~\ref{le3.1}(i) is
\begin{equation}\label{e3.5}C'Q=\left(\begin{array}{cc}
C'[n-1|n-1] & \begin{array}{c} r'_1\\ r'_2 \\ \vdots \\ r'_{n-1}   \end{array}   \\
\begin{array}{cccc} c'_{n1} & c'_{n2} & \cdots & c'_{nn-1}  \end{array}         & r'_n
\end{array}\right),
\end{equation}
where $(r'_1, r'_2, \ldots, r'_n)$ is the row-sum vector of $C'.$
\bigskip

Recall that a column vector $v'=(v'_1,v'_2,\ldots,v'_n)^T$ is said to be {rooted} if $v'_j\geq v'_n\geq 0$  for $1\leq j\leq n-1$.

\begin{lem}\label{le3.3}
Let $u=(u_1, u_2, \ldots, u_n)^T$ be a column vector and $Q$ be as in (\ref{e3.4}).  Then the following (i)-(ii) hold.
\begin{enumerate}
\item[(i)] $Qu$ is rooted  if and only if  $u$ is nonnegative.
\item[(ii)] For $1\leq j\leq n-1$, $(Qu)_j>(Qu)_n$ if and only if $u_j>0 $.
\end{enumerate}
\end{lem}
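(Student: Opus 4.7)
The plan is to simply compute $Qu$ explicitly from the block form of $Q$ given in (\ref{e3.4}) and then read off both statements by comparing coordinates. No external results are needed.

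From the block description
$$Q = \begin{pmatrix} I_{n-1} & J_{(n-1)\times 1}\\ O_{1\times(n-1)} & 1 \end{pmatrix},$$
a direct multiplication yields $(Qu)_j = u_j + u_n$ for $1\leq j\leq n-1$ and $(Qu)_n = u_n$. Once this formula is in hand, both parts are immediate.

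For (i), by the definition of \emph{rooted}, $Qu$ is rooted if and only if $(Qu)_j \geq (Qu)_n \geq 0$ for every $1\leq j\leq n-1$. Substituting the formula above, this becomes $u_j + u_n \geq u_n \geq 0$, i.e., $u_j \geq 0$ for $1\leq j\leq n-1$ together with $u_n\geq 0$, which is exactly the condition that $u$ is nonnegative. For (ii), the inequality $(Qu)_j > (Qu)_n$ becomes $u_j + u_n > u_n$, which is equivalent to $u_j > 0$.

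Since both parts reduce to one-line substitutions after the initial computation of $Qu$, there is no real obstacle; the only thing to be careful about is correctly identifying the entries of $Qu$ from the block form of $Q$.
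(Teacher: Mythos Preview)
Your proof is correct and follows exactly the same approach as the paper: compute $Qu=(u_1+u_n,\ldots,u_{n-1}+u_n,u_n)^T$ and read off both statements directly from this formula. The paper's proof is in fact even terser, simply stating that (i)--(ii) follow from this observation.
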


\begin{proof}
(i)-(ii) follow from the observation that
$Qu=(u_1+u_n, u_2+u_n, \ldots, u_{n-1}+u_n, u_n)^T$.
\end{proof}

The following theorem is immediate from Lemma~\ref{le3.1} by applying $P=I$, the $Q$ in (\ref{e3.4}), $v'=Qu$ and referring to (\ref{e3.5}) and Lemma~\ref{le3.3}.

\begin{thm}\label{th3.4}
 Let $C=(c_{ij})$, $C'=(c_{ij})$ be  $n\times n$ matrices with row-sum vectors $(r_1, r_2, \ldots, r_n)$ and $(r'_1, r'_2, \ldots, r'_n)$ respectively. Assume the following (i)-(iv).
\begin{enumerate}
\item[(i)]   $C[n|n-1]\leq C'[n|n-1]$ and $(r_1, r_2, \ldots, r_n)\leq(r'_1, r'_2, \ldots, r'_n)$.
\item[(ii)]  $C'$ has a rooted eigenvector $v'=(v'_1, v'_2, \ldots, v'_n)^T$ for $\lambda'\in \mathbb{R}$;
\item[(iii)] $C$ has a nonnegative left eigenvector $v^T=(v_1, v_2, \ldots, v_n)$ for $\lambda\in \mathbb{R}$;
\item[(iv)] $v^Tv'>0.$
\end{enumerate}
 Then $\lambda\leq \lambda'$.
Moreover, $\lambda=\lambda'$
if and only if the following (a), (b) hold.
\begin{enumerate}
\item[(a)] If $v'_n\not=0$, then $r_i=r'_i$ for $1\leq i\leq n$ with $v_i\not=0$.
\item[(b)]
$c'_{ij}=c_{ij} \hbox{ for~}1\leq i\leq n,1\leq j\leq n-1 \hbox{~with~} v_i\ne 0 \hbox{~and~} v'_j> v'_n.$
\end{enumerate} \qed
\end{thm}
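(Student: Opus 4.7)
The plan is to deduce Theorem~\ref{th3.4} as a direct specialization of Lemma~\ref{le3.1} with $P=I_n$ and $Q$ as defined in (\ref{e3.4}). The hypotheses of the theorem are tailored to translate termwise into those of Lemma~\ref{le3.1}, so the main work is bookkeeping rather than computation.

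First, I would use Lemma~\ref{le3.3}(i) to produce the hidden vector $u$: since $v'$ is rooted, there is a unique nonnegative column vector $u$ with $v'=Qu$, namely $u_j=v'_j-v'_n$ for $j<n$ and $u_n=v'_n$. With this $u$, hypothesis (ii) of Lemma~\ref{le3.1} becomes ``$C'$ has eigenvector $Qu$ for $\lambda'$,'' which is exactly hypothesis (ii) of the theorem. Hypotheses (iii) and (iv) of Lemma~\ref{le3.1} specialize with $P=I_n$ to ``$C$ has a nonnegative left eigenvector $v^T$ for $\lambda$'' and ``$v^T Qu = v^T v'>0$,'' i.e., to hypotheses (iii) and (iv) of the theorem.

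The one step that requires unpacking is hypothesis (i) of Lemma~\ref{le3.1}, namely $CQ\le C'Q$. Using (\ref{e3.5}), the matrix $CQ$ has the first $n-1$ columns equal to $C[n|n-1]$ and the last column equal to the row-sum vector $(r_1,\dots,r_n)^T$, and similarly for $C'Q$. Therefore the single matrix inequality $CQ\le C'Q$ is equivalent to the pair of inequalities $C[n|n-1]\le C'[n|n-1]$ and $(r_1,\dots,r_n)\le(r'_1,\dots,r'_n)$, which is precisely condition (i) of the theorem. Applying Lemma~\ref{le3.1} then yields $\lambda\le\lambda'$.

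For the equality case, Lemma~\ref{le3.1} gives $\lambda=\lambda'$ iff $(C'Q)_{ij}=(CQ)_{ij}$ for all $i,j$ with $v_i\ne 0$ and $u_j\ne 0$. I would split this condition by whether $j<n$ or $j=n$. For $j<n$ we read from (\ref{e3.5}) that $(CQ)_{ij}=c_{ij}$ and $(C'Q)_{ij}=c'_{ij}$, and by Lemma~\ref{le3.3}(ii) the condition $u_j\ne 0$ is the same as $v'_j>v'_n$; this yields clause (b). For $j=n$ we have $(CQ)_{in}=r_i$, $(C'Q)_{in}=r'_i$, and $u_n=v'_n$, giving clause (a). The only part calling for any care is checking that the indexing conventions for $u$ and $v'$ line up across Lemma~\ref{le3.3}(i)--(ii), but this is routine. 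With these translations the equivalence of $\lambda=\lambda'$ with (a) and (b) is immediate, completing the proof.
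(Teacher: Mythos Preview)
Your proposal is correct and follows exactly the approach indicated in the paper: the paper itself states that Theorem~\ref{th3.4} is immediate from Lemma~\ref{le3.1} with $P=I_n$, the $Q$ of (\ref{e3.4}), and $v'=Qu$, referring to (\ref{e3.5}) and Lemma~\ref{le3.3}. Your write-up simply spells out this translation in full detail, and every step (the identification of $CQ\le C'Q$ with condition~(i), the use of Lemma~\ref{le3.3} to pass between $u$ and $v'$, and the split into $j<n$ and $j=n$ for the equality case) is correct.
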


Note that (a), (b) in Theorem~\ref{th3.4} are from (\ref{e3.1}) in Lemma~\ref{le3.1}.
The first part of assumption (i) in Theorem~\ref{th3.4} indicates that in some sense the last column is {\it irrelevant} in the comparison of $C$ and $C'$. The following theorem is the dual version of Theorem~\ref{th3.4} which is proved by a dual proof.

\begin{thm}\label{th3.5}
 Let $C=(c_{ij})$, $C'=(c_{ij})$ be  $n\times n$ matrices with row-sum vectors $(r_1, r_2, \ldots, r_n)$ and $(r'_1, r'_2, \ldots, r'_n)$ respectively. Assume the following (i)-(iv).
\begin{enumerate}
\item[(i)]   $C[n|n-1]\geq C'[n|n-1]$ and $(r_1, r_2, \ldots, r_n)\geq(r'_1, r'_2, \ldots, r'_n)$, where $(r_1, r_2, \ldots, r_n)$ and $(r'_1, r'_2, \ldots, r'_n)$ are the row-sum vectors of $C$ and $C'$, respectively.
\item[(ii)]  $C'$ has a rooted eigenvector $v'=(v'_1, v'_2, \ldots, v'_n)^T$ for $\lambda'\in \mathbb{R}$;
\item[(iii)] $C$ has a nonnegative left eigenvector $v^T=(v_1, v_2, \ldots, v_n)$ for $\lambda\in \mathbb{R}$;
\item[(iv)] $v^Tv'>0.$
\end{enumerate}
 Then $\lambda\geq \lambda'$.
Moreover, $\lambda=\lambda'$
if and only if the following (a)-(b) hold.
\begin{enumerate}
\item[(a)] If $v'_n\not=0$, then $r_i=r'_i$ for $1\leq i\leq n$ with $v_i\not=0$.
\item[(b)]
$c'_{ij}=c_{ij} \hbox{ for~}1\leq i\leq n,1\leq j\leq n-1 \hbox{~with~} v_i\ne 0 \hbox{~and~} v'_j> v'_n.$
\end{enumerate}  \qed
\end{thm}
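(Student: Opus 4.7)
The plan is to mirror the derivation of Theorem~\ref{th3.4} from Lemma~\ref{le3.1}, but now invoking the dual Lemma~\ref{le3.2} instead. Concretely, I would apply Lemma~\ref{le3.2} with $P=I_n$ and $Q$ as in (\ref{e3.4}). Since the rooted eigenvector $v'$ satisfies $v'_j \geq v'_n \geq 0$ for $1\leq j\leq n-1$, Lemma~\ref{le3.3}(i) guarantees a nonnegative column vector $u=(u_1,\ldots,u_n)^T$ with $v'=Qu$; explicitly $u_j = v'_j - v'_n$ for $j<n$ and $u_n = v'_n$. This is exactly what is needed to realize assumption (ii) of Lemma~\ref{le3.2} with the chosen $Q$.

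Next I would translate assumption (i) of Theorem~\ref{th3.5} into the matrix inequality $CQ \geq C'Q$ required by Lemma~\ref{le3.2}(i). By the explicit form (\ref{e3.5}), the first $n-1$ columns of $CQ$ coincide with $C[n|n-1]$, while the last column of $CQ$ equals the row-sum vector of $C$. Thus the entrywise inequality $CQ \geq C'Q$ is equivalent to $C[n|n-1] \geq C'[n|n-1]$ together with $(r_1,\ldots,r_n) \geq (r'_1,\ldots,r'_n)$, which is precisely condition (i) of Theorem~\ref{th3.5}. Assumptions (iii) and (iv) of Lemma~\ref{le3.2} match those of Theorem~\ref{th3.5} verbatim once $P=I_n$ is substituted. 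Lemma~\ref{le3.2} then yields $\lambda \geq \lambda'$.

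For the equality characterization, Lemma~\ref{le3.2} asserts that $\lambda = \lambda'$ iff $(CQ)_{ij} = (C'Q)_{ij}$ for all pairs $(i,j)$ with $v_i \neq 0$ and $u_j \neq 0$. I would split this into two cases via (\ref{e3.5}) and Lemma~\ref{le3.3}(ii). For $j \leq n-1$, the condition $u_j \neq 0$ is equivalent to $v'_j > v'_n$, and $(CQ)_{ij} = c_{ij}$, $(C'Q)_{ij} = c'_{ij}$; this yields clause (b). For $j = n$, the condition $u_n \neq 0$ is exactly $v'_n \neq 0$, and the corresponding column of $CQ$ (resp.\ $C'Q$) records row-sums, giving $r_i = r'_i$ for those $i$ with $v_i \neq 0$; this is clause (a).

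I do not anticipate any real obstacle, since the entire argument is the formal dual of Theorem~\ref{th3.4}: every occurrence of $\leq$ is replaced by $\geq$, and the references to Lemma~\ref{le3.1} are replaced by references to Lemma~\ref{le3.2}, while the auxiliary identifications via (\ref{e3.5}) and Lemma~\ref{le3.3} are sign-agnostic and carry over unchanged. The only point worth double-checking is that the positivity hypothesis $v^T P Q u > 0$ in Lemma~\ref{le3.2}(iv) reduces to $v^T v' > 0$ under $P=I_n$ and $v' = Qu$, which is immediate.
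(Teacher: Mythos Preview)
Your proposal is correct and is precisely the ``dual proof'' the paper refers to: Theorem~\ref{th3.5} is obtained from Lemma~\ref{le3.2} by the same specialization $P=I_n$, $Q$ as in (\ref{e3.4}), $v'=Qu$, together with (\ref{e3.5}) and Lemma~\ref{le3.3}, exactly as Theorem~\ref{th3.4} was obtained from Lemma~\ref{le3.1}. Every step you outline matches this dual argument, and the equality characterization via the split $j\leq n-1$ versus $j=n$ is handled correctly.
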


\begin{exam}
Consider the following three matrices
$$C'_\ell= \begin{pmatrix}
3 & 1 & 1\\ 0 & 0 & 3 \\ 0 & 1 & 2
\end{pmatrix}, \quad  C=\begin{pmatrix}
3 & 1 & 1\\ 1 & 0 & 2 \\ 1 & 1 & 1
\end{pmatrix}, \quad C'_r=\begin{pmatrix}
3 & 2 & 0\\ 1 & 2 & 0 \\ 1 & 2 & 0
\end{pmatrix}$$
with $C'_\ell[3|2]\leq C[3|2] \leq C'_r[3|2],$ and the same row-sum vector $(5, 3, 3)$.
Note that $C'_\ell$ has a rooted eigenvector $v'_\ell=(1, 0, 0)^T$ for $\lambda'_\ell=3$
and $C'_r$ has a rooted eigenvector $v'_r=(2, 1, 1)^T$ for $\lambda'_r=4$.
Since $C$ is irreducible, it has a positive left eigenvector $(v_1,v_2,v_3)$ for $\rho(C)$.
Hence assumptions (i)-(iv) in Theorem~\ref{th3.4} and Theorem~\ref{th3.5} hold, and
we conclude that $\lambda'_\ell\leq \rho(C)\leq \lambda'_r$.
Since $[3]\times [1]$ is the set of the pairs $(i,j)$ described in Theorem~\ref{th3.4}(b) and Theorem~\ref{th3.5}(b),
by simple comparison of
the first columns $C'_\ell[3|1]< C[3|1] = C'_r[3|1]$ of these three matrices,
we easily conclude that $3=\lambda'_\ell< \rho(C)= \lambda'_r=4$ by the second part of Theorem~\ref{th3.4} and that of Theorem~\ref{th3.5}.
\end{exam}

\section{Matrix with a rooted eigenvector}\label{s4}

To apply Theorem~\ref{th3.4} and Theorem \ref{th3.5}, we need to  construct $C'$ which possesses a rooted eigenvector for some $\lambda'$. The following lemma comes directly.

\begin{lem}\label{le4.1}
If an $n\times n$ matrix $C'$ has a rooted eigenvector for $\lambda'$, then $C'+dI_n$ also has
the same rooted eigenvector for $\lambda'+d,$ where $d$ is any constant. \qed
\end{lem}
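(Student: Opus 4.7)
The plan is to observe that this lemma is an immediate consequence of the definition of eigenvector together with the fact that ``rooted'' is a purely coordinate-wise property of the vector itself, having nothing to do with the matrix in question.

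Concretely, I would let $v'$ denote the rooted eigenvector of $C'$ guaranteed by the hypothesis, so that $C'v' = \lambda' v'$. I would then compute
\[
(C'+dI_n)v' \;=\; C'v' + d\,I_n v' \;=\; \lambda' v' + d v' \;=\; (\lambda'+d)\,v',
\]
which directly exhibits $v'$ as an eigenvector of $C'+dI_n$ with eigenvalue $\lambda'+d$. Since the rooted condition $v'_j \geq v'_n \geq 0$ for $1 \leq j \leq n-1$ is a statement about the entries of $v'$ alone, this condition is preserved unchanged when we merely change the ambient matrix from $C'$ to $C'+dI_n$.

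There is no real obstacle here; the content is essentially the standard fact that adding a scalar multiple of the identity shifts every eigenvalue by that scalar while leaving all eigenvectors intact. The only thing worth flagging is that the statement allows $d$ to be \emph{any} constant, including negative values, so no positivity hypothesis on $d$ is needed at any point of the argument, and in particular the rooted inequalities $v'_j\geq v'_n \geq 0$ survive regardless of the sign of $d$.
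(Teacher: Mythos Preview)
Your proof is correct and matches the paper's approach: the paper marks the lemma with \qed\ and no proof, treating it as immediate, and your one-line eigenvector computation together with the observation that ``rooted'' is a property of the vector alone is exactly the intended (trivial) justification.
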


The definition of a rooted column vector is generalized to a rooted matrix as follows.
\begin{defn}
An $n\times n$  matrix $C'=(c'_{ij})$ is called {\it rooted}  if there is a constant $d$ such that the first $n-1$  columns and the row-sum vector of $C'+dI_n$ are all rooted.
\end{defn}

The matrix $Q$ in (\ref{e3.4}) is invertible with
$$Q^{-1}=I_n-\sum_{i=1}^{n-1} E_{in}=
\begin{pmatrix}
I_{n-1}& -J_{(n-1)\times 1}
  \\
  O_{1\times (n-1)}
    & 1 \\
\end{pmatrix}.$$  Multiplying $Q^{-1}$ to the left of $C'Q$ in (\ref{e3.5}), we have
\begin{equation}\label{e4.1}
Q^{-1}C'Q=
\begin{pmatrix}
c'_{11}-c'_{n1}     & c'_{12}-c'_{n2} & \cdots     & c'_{1~n-1}-c'_{nn-1} & r'_1-r'_n \\
c'_{21}-c'_{n1}     & c'_{22}-c'_{n2} & \cdots     & c'_{2~n-1}-c'_{nn-1} & r'_2-r'_n \\
\vdots      & \vdots  & \vdots     & \vdots      & \vdots \\
c'_{n-1~1}-c'_{n1}     & c'_{n-1~2}-c'_{n2} & \cdots     & c'_{n-1~n-1}-c'_{nn-1} & r'_{n-1}-r'_{n} \\
c'_{n1}           & c'_{n2}       & \cdots     & c'_{nn-1}           & r'_n \\
\end{pmatrix}.
\end{equation}
 From (\ref{e4.1}), $C'$ is rooted if and only if $Q^{-1}(C'+dI_n)Q$ is nonnegative for some constant $d$. Moreover, $v'$ is an eigenvector of $C'$ for $\lambda'$ if and only if $u=Q^{-1}v'$ is an eigenvector of $Q^{-1}C'Q$ for $\lambda'$.

\begin{lem}\label{le4.3}
If $C'=(c'_{ij})$ is an $n\times n$ rooted matrix, then $\rho_r(C')$ exists
and $C'$ has a rooted eigenvector $v'$ for $\rho_r(C')$.
Moreover, for any eigenvalue $\lambda$ with a rooted eigenvector $v'=(v'_1, v'_2, \ldots, v'_n)^T$ of $C'$,  
the following (i), (ii) hold.
\begin{enumerate}
\item[(i)] If row vector $(c'_{n1}, c'_{n2}, \ldots, c'_{nn-1})$ is positive, then $v'$ is positive.
\item[(ii)] If  $v'$ is positive and  $r'_i> r'_n$ for some $1\leq i\leq n-1$, then $v'_i>v'_n$.
\end{enumerate}
\end{lem}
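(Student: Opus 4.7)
The plan is to reduce everything to the nonnegative matrix $B := Q^{-1}(C'+dI_n)Q$, where $d$ is the constant furnished by the rootedness of $C'$. Applying formula (\ref{e4.1}) to $C'+dI_n$ in place of $C'$ reveals that the first $n-1$ columns of $B$ are nonnegative precisely because those columns of $C'+dI_n$ are rooted vectors, and the last column of $B$ is nonnegative precisely because the row-sum vector of $C'+dI_n$ is rooted. Moreover, $v'=Qu$ is an eigenvector of $C'$ for $\lambda$ if and only if $u=Q^{-1}v'$ is an eigenvector of $B$ for $\lambda+d$, and by Lemma~\ref{le3.3}(i), $v'$ is rooted exactly when $u$ is nonnegative.

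For the existence half, I would apply Theorem~\ref{th2.1}(i) to $B$ to produce a nonnegative eigenvector $u$ for $\rho(B)$; pulling back, $v'=Qu$ is then a rooted eigenvector of $C'$ for the real number $\rho(B)-d$. To identify $\rho(B)-d$ with $\rho_r(C')$, I would observe that any real eigenvalue $\mu$ of $C'$ yields the real eigenvalue $\mu+d$ of $B$, and nonnegativity of $B$ forces $\mu+d\leq\rho(B)$, so $\mu\leq\rho(B)-d$.

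For (i), I would inspect the $n$-th row of $C'v'=\lambda v'$. If $v'_n=0$, then $\sum_{j=1}^{n-1} c'_{nj}v'_j = 0$, and the strict positivity of $(c'_{n1},\ldots,c'_{n,n-1})$ combined with the rooted inequality $v'_j\geq v'_n=0$ forces $v'_j=0$ for all $j\leq n-1$, contradicting $v'\neq 0$. Hence $v'_n>0$, and the rooted condition $v'_j\geq v'_n$ propagates positivity to every entry.

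For (ii), I would subtract the $n$-th row equation from the $i$-th row equation of $C'v'=\lambda v'$ and then rewrite each $v'_j=(v'_j-v'_n)+v'_n$ to obtain
$$\lambda(v'_i-v'_n)=\sum_{j=1}^{n-1}(c'_{ij}-c'_{nj})(v'_j-v'_n)+v'_n(r'_i-r'_n),$$
after which I would add $d(v'_i-v'_n)$ to both sides and absorb the extra $d$ into the $j=i$ summand to get
$$(\lambda+d)(v'_i-v'_n)=\sum_{\substack{j=1\\ j\neq i}}^{n-1}(c'_{ij}-c'_{nj})(v'_j-v'_n)+(c'_{ii}+d-c'_{ni})(v'_i-v'_n)+v'_n(r'_i-r'_n).$$
The rootedness of the columns of $C'+dI_n$ makes each coefficient $c'_{ij}-c'_{nj}$ ($j\neq i$) and $c'_{ii}+d-c'_{ni}$ nonnegative, and $v'_j-v'_n\geq 0$ from the rootedness of $v'$; since $v'_n>0$ and $r'_i>r'_n$, the last summand is strictly positive, so the right-hand side is strictly positive and $v'_i-v'_n$ cannot be $0$. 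The main obstacle I foresee is bookkeeping rather than depth: the shift by $d$ has to be used twice—once when verifying that $B$ is nonnegative, and a second time in (ii) to convert the potentially negative diagonal term $c'_{ii}-c'_{ni}$ into the nonnegative $c'_{ii}+d-c'_{ni}$—and part (ii) requires cleanly separating the "column" contribution (controlled by column rootedness) from the "row-sum" contribution (controlled by row-sum rootedness).
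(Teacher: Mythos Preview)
Your proposal is correct and follows essentially the same route as the paper: conjugate by $Q$ and shift by $d$ to reduce the existence claim to Perron--Frobenius for the nonnegative matrix $Q^{-1}(C'+dI_n)Q$, argue (i) by reading the $n$-th eigenvector equation, and argue (ii) by comparing the $i$-th and $n$-th rows after the shift. The only cosmetic difference is in (ii): the paper bounds $(\lambda+d)v'_i$ from below by $(\lambda+d)v'_n+(r'_i-r'_n)v'_n$ and then divides by $\lambda+d$ (choosing $d$ large enough that $\lambda+d>0$), whereas you subtract the two rows first and read off $(\lambda+d)(v'_i-v'_n)>0$ directly---your version has the small bonus that $\lambda+d>0$ falls out rather than having to be arranged in advance, but the computation is the same.
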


\begin{proof}
If $C'$ is a rooted matrix, then $Q^{-1}(C'+dI_n)Q$ is nonnegative for some constant $d$. By Theorem \ref{th2.1}, there is a nonnegative eigenvector $u$ of $Q^{-1}(C'+dI_n)Q$ for $\rho(Q^{-1}(C'+dI_n)Q)=\rho_r(C')+d$. Therefore, $C'$ has a rooted eigenvector $v'=Qu$ for $\rho_r(C')$.

(i) Suppose that $(c'_{n1}, c'_{n2}, \ldots, c'_{nn-1})$ is positive and $v'_n=0$. Then
$$\sum_{j=1}^{n-1}c'_{nj}v'_j=\sum_{j=1}^nc'_{nj}v'_j=(C'v')_n=\lambda v'_n=0.$$
Hence $v'$ is a zero vector, a contradiction. So $v'_n>0$ and $v'>0$ since $v'$ is rooted.

(ii) Note that  the row-sum vector  $(r'_1, r'_2, \ldots, r'_n)^T$  of $C'$ is rooted, and there exists a constant $d$ such that $\lambda+d>0$ and $C'+dI_n=(c''_{ij})$ satisfies $c''_{ij}\geq c''_{nj}$ for $1\leq i\leq n,1\leq j\leq n-1$. From the computation
$$
\begin{array}{lll}
&& \displaystyle(\lambda+d)v'_i=\sum_{j=1}^nc''_{ij}v'_j  \\
&=&\displaystyle\sum_{j=1}^{n}c''_{nj}v'_j+\sum_{j=1}^{n}(c''_{ij}-c''_{nj})v'_j  \\
&\geq &\displaystyle\sum_{j=1}^{n}c''_{nj}v'_j+\sum_{j=1}^{n}(c''_{ij}-c''_{nj})v'_n\\
&=& (\lambda+d)v'_n+(r'_i+d-r'_n-d)v'_n > (\lambda+d)v'_n,
\end{array}
$$
and deleting $\lambda+d,$ we have $v'_i>v'_n$.
\end{proof}

By Lemma \ref{le2.5} and Lemma \ref{le4.3}, we have the following lemma.
\begin{lem}\label{le4.4}
If $C'$ is an $n\times n$ matrix, $\Pi=\{\pi_1,\ldots,\pi_\ell\}$ is an equitable partition of $C'$ with $n\in\pi_\ell$ and $\Pi(C')$ is a rooted matrix, then $C'$ has a rooted eigenvector $Su$ for $\rho_r(\Pi(C'))$, where $S$ is the characteristc matrix of $\Pi$ and $u$ is a rooted eigenvector of $\Pi(C')$ for $\rho_r(C')$. \qed
\end{lem}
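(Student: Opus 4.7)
The plan is simply to combine Lemma~\ref{le4.3} (applied to the quotient $\Pi(C')$) with Lemma~\ref{le2.5} (applied to the equitable partition $\Pi$ of $C'$), and then verify that the lifted eigenvector is rooted. First, because $\Pi(C')$ is a rooted matrix, Lemma~\ref{le4.3} guarantees that $\rho_r(\Pi(C'))$ exists and produces a rooted eigenvector $u=(u_1,u_2,\ldots,u_\ell)^T$ of $\Pi(C')$ for $\rho_r(\Pi(C'))$; this is exactly the $u$ named in the hypothesis.

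Next, since $\Pi$ is an equitable partition of $C'$ and $u$ is an eigenvector of $\Pi(C')$, Lemma~\ref{le2.5} promotes $u$ to an eigenvector $Su$ of $C'$ for the same eigenvalue $\rho_r(\Pi(C'))$. All that remains is to check that $Su$ is rooted as a column vector of length $n$. This is where the placement hypothesis $n \in \pi_\ell$ is used: by definition of the characteristic matrix, $(Su)_i = u_a$ whenever $i \in \pi_a$, and therefore $(Su)_n = u_\ell$. Rootedness of $u$ gives $u_a \geq u_\ell \geq 0$ for $1 \leq a \leq \ell-1$, and trivially $u_\ell \geq u_\ell$; translating through the partition yields $(Su)_i \geq (Su)_n \geq 0$ for every $i \in [n-1]$, which is exactly the definition of $Su$ being rooted.

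There is essentially no substantive obstacle in the argument, since the lemma is a bookkeeping step that records how the construction of rooted eigenvectors lifts from the quotient $\Pi(C')$ to $C'$ itself. The only point deserving care is the hypothesis $n \in \pi_\ell$: without it, the coordinate $(Su)_n$ would coincide with some $u_a$ for $a<\ell$ rather than with the distinguished entry $u_\ell$, and the defining inequality of a rooted vector could fail. Thus the role of $\pi_\ell$ as the block containing the index $n$ is the one non-trivial feature of the hypothesis, and building it into the partition at the outset is what makes the proof a clean composition of Lemma~\ref{le4.3} and Lemma~\ref{le2.5}.
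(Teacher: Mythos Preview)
Your proof is correct and follows exactly the approach the paper indicates: the paper simply writes ``By Lemma~\ref{le2.5} and Lemma~\ref{le4.3}, we have the following lemma'' and marks it with \qed, and your argument spells out precisely that composition, together with the straightforward verification (using $n\in\pi_\ell$) that the lifted vector $Su$ is rooted.
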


Lemma \ref{le4.3} and Lemma \ref{le4.4} will be useful to construct matrix $C'$ with a rooted eigenvector, while the following lemma will help us reduce the size of $C'$ for computing $\rho_r(C')$.

\begin{lem}\label{le4.5}
Let $C'$ be an $n\times n$ rooted matrix and $\Pi=\{\pi_1,\ldots,\pi_\ell\}$ be an equitable partition of $C'^T$ with $\pi_\ell=\{n\}$. Then $\rho_r(C')=\rho_r(\Pi(C'^{T}))$.
\end{lem}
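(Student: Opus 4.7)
The plan is to exploit the similarity $M:=Q^{-1}(C'+dI_n)Q$ that underlies the definition of \emph{rooted}, transport the equitable partition $\Pi$ from $C'^T$ to $M^T$, and then invoke Proposition~\ref{pro2.6} together with the nonnegativity of $M$. First, choose $d$ so that $M\ge 0$; as in the proof of Lemma~\ref{le4.3}, one then has $\rho(M)=\rho_r(C')+d$.

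The main step is to check that $\Pi$ is equitable for $M^T=Q^T(C'^T+dI_n)Q^{-T}$, with quotient similar to $\Pi(C'^T)+dI_\ell$. The singleton hypothesis $\pi_\ell=\{n\}$ is exactly what makes this work: for $i<n$ the $i$-th row of $Q^T$ is the standard row vector $e_i^T$, while the $n$-th row is the all-ones row, so the row-sums of $Q^T$ on every block $\pi_b$ are constant on each row-block of $\Pi$ (with the row indexed by $n$ living in its own singleton block). A short computation gives
\begin{equation*}
\Pi(Q^T)=I_\ell+\textstyle\sum_{b=1}^{\ell-1}|\pi_b|E_{\ell b},\qquad \Pi(Q^{-T})=I_\ell-\textstyle\sum_{b=1}^{\ell-1}|\pi_b|E_{\ell b},
\end{equation*}
and these are mutually inverse since $E_{\ell b}E_{\ell c}=0$ whenever $b<\ell$. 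Combining with the obvious $\Pi(C'^T+dI_n)=\Pi(C'^T)+dI_\ell$ and two applications of Proposition~\ref{prop2.8} then yields
\begin{equation*}
\Pi(M^T)=\Pi(Q^T)\bigl(\Pi(C'^T)+dI_\ell\bigr)\Pi(Q^{-T}),
\end{equation*}
a matrix similar to $\Pi(C'^T)+dI_\ell$.

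To finish, $M^T$ is nonnegative with equitable partition $\Pi$, so Proposition~\ref{pro2.6} gives $\rho(M^T)=\rho(\Pi(M^T))$. The quotient $\Pi(M^T)$ is itself nonnegative (its entries are averages of nonnegative entries of $M^T$), so its spectral radius is a real eigenvalue equal to $\rho_r(\Pi(M^T))$, which by the similarity from the previous paragraph equals $\rho_r(\Pi(C'^T))+d$. Matching this against $\rho(M^T)=\rho(M)=\rho_r(C')+d$ and cancelling $d$ gives the claim. The main obstacle is the middle paragraph: the equitability of $\Pi$ for $Q^T$ and $Q^{-T}$ genuinely depends on $\pi_\ell=\{n\}$, since otherwise $Q^{\pm T}$ would mix the last coordinate across several blocks and $\mathrm{col}(S)$ would fail to be $M^T$-invariant.
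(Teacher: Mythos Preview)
Your proof is correct and follows essentially the same route as the paper: choose $d$ so that $M=Q^{-1}(C'+dI_n)Q$ is nonnegative, observe that $\Pi$ is equitable for $Q^T$ and $Q^{-T}$ (precisely because $\pi_\ell=\{n\}$), apply Proposition~\ref{prop2.8} to see that $\Pi(M^T)$ is similar to $\Pi(C'^T)+dI_\ell$, and finish with Proposition~\ref{pro2.6}. The only difference is cosmetic: you compute $\Pi(Q^{\pm T})$ explicitly and verify by hand that they are mutual inverses, whereas the paper simply cites the second clause of Proposition~\ref{prop2.8} for this.
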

\begin{proof} Let $Q$ be the same as in \eqref{e3.4}. Then $\Pi$ is an equitable partition of both $Q^T$ and $(Q^{-1})^T$. Choose a constant $d$ such that $(Q^{-1}(C'+dI_n)Q)^T$ is nonnegative. By Proposition \ref{pro2.6} and  Proposition \ref{prop2.8}, we have
$$\rho((Q^{-1}(C'+dI_n)Q)^T)=\rho(\Pi((Q^{-1}(C'+dI_n)Q)^T))=\rho(\Pi((C'+dI_n)^T)).$$
Hence $\rho_r(C')=\rho_r(\Pi(C'^{T}))$ since $\Pi((C'+dI_n)^T)=\Pi(C'^T)+dI_\ell$.
\end{proof}

The following matrix is a special rooted matrix which will be used to obtain certain bounds of the spectral radius of a nonnegative matrix.

For $d,f_1,f_2,r_1,r_2,\ldots,r_n\geq 0$ with $f_1\geq f_2$ and $r_j\geq r_n$ with $1\leq j\leq n-1$, define
\begin{equation}\label{e4.2}
M_n:=
\begin{pmatrix}
f_1J_{n-1}+(d-f_1)I_{n-1} & \left.
                              \begin{array}{c}
                                r_1-d-(n-2)f_1\\
                                r_2-d-(n-2)f_1 \\
                                \vdots\\
                                r_{n-1}-d-(n-2)f_1
                              \end{array}
                            \right. \\
f_2J_{1\times (n-1)} & r_n-(n-1)f_2
\end{pmatrix}.
\end{equation}

\begin{lem}\label{le4.6} Referring to the notation of $M_n$ in (\ref{e4.2}), the following (i), (ii) hold.
\begin{enumerate}
\item[(i)] The matrix $M_n$ has a rooted eigenvector $v'$ for the largest real eigenvalue $\rho_r(M_n)$ of $M_n$.
\item[(ii)] If $f_2>0$, then $v'>0$.
\end{enumerate}
\end{lem}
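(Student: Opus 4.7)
The plan is to realize Lemma~\ref{le4.6} as a direct application of Lemma~\ref{le4.3}. The entire task reduces to checking that $M_n$ fits the definition of a rooted matrix, after which Lemma~\ref{le4.3} simultaneously delivers the existence of $\rho_r(M_n)$ and the rooted eigenvector asserted in (i), while Lemma~\ref{le4.3}(i) will yield the positivity asserted in (ii).

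To begin, I would compute the row-sum vector of $M_n$. Row $i$ with $i\le n-1$ contains $n-2$ copies of $f_1$, one $d$ on the diagonal, and the last entry $r_i-d-(n-2)f_1$; these telescope to $r_i$. Row $n$ sums to $(n-1)f_2+(r_n-(n-1)f_2)=r_n$. So the row-sum vector of $M_n$ is exactly $(r_1,\ldots,r_n)$, which is a rooted vector by the standing assumption $r_j\ge r_n\ge 0$, and the same remains true if we add any nonnegative multiple of $I_n$.

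Next I would examine the first $n-1$ columns of $M_n$. Column $j$ (with $j\le n-1$) has a $d$ in position $j$, copies of $f_1$ in the remaining positions $1,\ldots,n-1$, and $f_2$ in position $n$. For such a column to be rooted one needs each entry to be at least $f_2$ along with $f_2\ge 0$; the conditions $f_1\ge f_2\ge 0$ are assumed, so the only possible defect is that the diagonal entry $d$ could be smaller than $f_2$. This is easily remedied by replacing $M_n$ with $M_n+d'I_n$, where $d':=\max(0,f_2-d)\ge 0$: the diagonal entries of the first $n-1$ columns become $d+d'\ge f_2$, while the nonnegativity of the shift keeps the row-sum vector rooted by the previous step. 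This verifies that $M_n$ is a rooted matrix.

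With the rootedness of $M_n$ established, Lemma~\ref{le4.3} immediately yields (i). For (ii), observe that the last row of $M_n$ begins with $n-1$ copies of $f_2$, so when $f_2>0$ the vector $(c'_{n1},\ldots,c'_{n\,n-1})$ is positive and Lemma~\ref{le4.3}(i) forces $v'>0$. I do not foresee any substantial obstacle; the argument is little more than a careful verification of the rooted-matrix definition against the explicit block form given in \eqref{e4.2}.
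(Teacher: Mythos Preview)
Your proposal is correct and follows essentially the same approach as the paper: the paper's proof simply asserts that $M_n$ is rooted and then invokes Lemma~\ref{le4.3} for both parts. You supply the explicit verification of rootedness (computing the row-sum vector and checking the first $n-1$ columns after the shift $d'=\max(0,f_2-d)$), which the paper omits, but the logical structure is identical.
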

\begin{proof} Since $M_n$ is rooted, (i) follows from the first part of Lemma~\ref{le4.3}, and (ii) follows from Lemma~\ref{le4.3}(i).
\end{proof}

\begin{lem}\label{le4.7} Referring to the notation of $M_n$ in (\ref{e4.2}), the following (i)-(iii) hold.
\begin{enumerate}
\item [(i)] The largest real eigenvalue $\rho_r(M_n)$ of $M_n$ is
\begin{align}\label{e4.3}
             &\frac{1}{2}(r_n+d-f_2+(n-2)(f_1-f_2)) \nonumber \\
             &+\frac{1}{2}\sqrt{(r_n-d+f_2-(n-2)(f_1-f_2))^2+4f_2\sum\limits_{i=1}^{n-1}(r_i-r_n)}.
\end{align}
In particular   $\rho_r(M_n)\geq\max(d-f_2, r_n)$.
\item[(ii)] If $f_1=f_2=f$ and $r_n=0$, then $$\rho_r(M_n)=\frac{d-f+\sqrt{(d-f)^2+4f m}}{2},$$
where $m:=\sum_{i=1}^{n-1} r_i$ is the sum of all entries of $M_n$.
\item [(iii)]  If $f_1=f_2$ and $r_t=r_{t+1}=\cdots=r_n$ for some $t\leq n$, then $\rho_r(M_t)=\rho_r(M_n).$
\end{enumerate}
\end{lem}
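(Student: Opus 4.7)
The plan is to reduce the computation of $\rho_r(M_n)$ to a $2\times 2$ eigenvalue problem via the equitable-partition machinery from Section~\ref{s4}. The key observation is that $\Pi=\{\{1,\ldots,n-1\},\{n\}\}$ is an equitable partition of $M_n^T$: for every $i\in\{1,\ldots,n-1\}$, the $i$th column of $M_n$ has its top $n-1$ entries summing to $d+(n-2)f_1$ and its last entry equal to $f_2$, both independent of $i$. Since $M_n$ is rooted (the shift $d'=\max\{0,f_2-d\}$ makes the first $n-1$ columns and row-sum vector of $M_n+d'I_n$ satisfy the rooted inequalities), Lemma~\ref{le4.5} gives $\rho_r(M_n)=\rho_r(\Pi(M_n^T))$, where
$$
\Pi(M_n^T)=\begin{pmatrix} d+(n-2)f_1 & f_2 \\ \sum_{i=1}^{n-1}r_i-(n-1)(d+(n-2)f_1) & r_n-(n-1)f_2 \end{pmatrix}.
$$

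For part~(i), I would solve the quadratic characteristic polynomial of this $2\times 2$ matrix. Its trace is precisely $A:=r_n+d-f_2+(n-2)(f_1-f_2)$, the first summand in~\eqref{e4.3}. Setting $B:=r_n-d+f_2-(n-2)(f_1-f_2)$, the identity $(A+B)(A-B)=4r_n\bigl(d-f_2+(n-2)(f_1-f_2)\bigr)$ lets one rewrite the discriminant $A^2-4\det\Pi(M_n^T)$ in the form $B^2+4f_2\sum_{i=1}^{n-1}(r_i-r_n)$, recovering~\eqref{e4.3}. The inequality $\rho_r(M_n)\geq\max(d-f_2,r_n)$ follows because the discriminant dominates $B^2$ (here I use $f_2\geq 0$ and $r_i\geq r_n$), so $\rho_r(M_n)\geq(A+|B|)/2=\max\{(A+B)/2,(A-B)/2\}=\max\{r_n,\,d-f_2+(n-2)(f_1-f_2)\}$, and the right-hand side is $\geq\max\{r_n,d-f_2\}$ by $f_1\geq f_2$ and $n\geq 2$.

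Parts~(ii) and~(iii) are substitutions into~\eqref{e4.3}. For~(ii), setting $f_1=f_2=f$ and $r_n=0$ gives $A=d-f$, $B=-(d-f)$, and $\sum_{i=1}^{n-1}(r_i-r_n)=m$, which collapses~\eqref{e4.3} to the stated expression. For~(iii), with $f_1=f_2$ the factor $(n-2)(f_1-f_2)$ vanishes, so both $A$ and $B$ depend on the data only through $r_n$, $d$, $f$; together with $r_t=r_n$ this shows the $A,B$ computed for $M_t$ match those for $M_n$. Moreover $\sum_{i=1}^{n-1}(r_i-r_n)=\sum_{i=1}^{t-1}(r_i-r_n)$ because the trailing terms $r_t-r_n,\ldots,r_{n-1}-r_n$ are all zero, so the discriminants agree as well and hence $\rho_r(M_n)=\rho_r(M_t)$.

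The main technical step is the discriminant rewriting in part~(i), which reduces to the single algebraic identity above; the remaining content is routine substitution and a comparison of $\max\{(A\pm B)/2\}$ with the claimed lower bound.
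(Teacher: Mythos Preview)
Your proposal is correct and follows essentially the same route as the paper: reduce via Lemma~\ref{le4.5} to the $2\times 2$ quotient $\Pi(M_n^T)$ using the equitable partition $\{\{1,\ldots,n-1\},\{n\}\}$ of $M_n^T$, then read off~\eqref{e4.3} from the quadratic formula and obtain (ii), (iii), and the lower bound in (i) by direct substitution. The paper states the quotient matrix and then says the rest follows ``by direct computation'' and ``immediately''; you have simply supplied those computations in full, including the discriminant identity and the $\max\{(A\pm B)/2\}$ argument for the lower bound.
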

\begin{proof}
Note that  $\Pi=\{\{1, 2, \ldots, n-1\}, \{n\} \}$ is an equitable partition of $M^T_n$, and
$$\Pi(M^T_n)=\begin{pmatrix} d+(n-2)f_1 & f_2 \\  \sum\limits_{i=1}^{n-1} (r_i-(d+(n-2)f_1))  & r_n-(n-1)f_2) \end{pmatrix}.$$
By Lemma \ref{le4.5} and direct computation, we find  $\rho_r(M)=\rho_r(\Pi(M^T_n))$ as the expression in  (\ref{e4.3}).
 The rest of (i),  (ii) and (iii) follow from (\ref{e4.3}) immediately.
\end{proof}

\section{The proof of Theorem A}\label{s5}

The following theorem is a strengthening of Theorem A.

\begin{thm}\label{th5.1}
Let $M=(m_{ab})$ be an $\ell\times \ell$ rooted matrix.  If $C=(c_{ij})$ is an $n\times n$ nonnegative matrix and there exists a partition $\Pi=(\pi_1, \pi_2, \ldots, \pi_\ell)$ of $[n]$ such that
$$\max_{i\in \pi_a} \sum_{j\in \pi_b} c_{ij}\leq m_{ab} \quad \hbox{and} \quad \max_{i\in \pi_a} \sum_{j=1}^n c_{ij}\leq \sum_{c=1}^\ell m_{ac}$$
for $1\leq a\leq \ell$ and $1\leq b\leq \ell-1$, then $\rho(C)\leq \rho_r(M).$
Moreover, let $u=(u_1,\ldots,u_\ell)$ be a rooted eigenvector of $M$ for $\rho_r(M)$. If $C$ is irreducible, 
then $\rho(C)= \rho_r(M)$ if and only if the following (a), (b) hold.
\begin{enumerate}
\item[(a)] If $u_\ell\not=0$, then $\sum_{j=1}^n c_{ij}= \sum_{c=1}^\ell m_{ac}$ for $1\leq a\leq \ell, i\in \pi_a$.
\item[(b)]
$\sum_{j\in \pi_b}c_{ij}=m_{ab} \hbox{ for~}1\leq a\leq \ell,1\leq b\leq \ell-1,i\in\pi_a \hbox{~with~} u_b> u_\ell.$
\end{enumerate}
\end{thm}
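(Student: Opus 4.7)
The natural approach is to lift a rooted eigenvector $u=(u_1,\dots,u_\ell)^T$ of $M$ for $\rho_r(M)$ to an $n$-vector via the characteristic matrix $S$ of $\Pi$ and work directly with Theorem~\ref{th2.1}, without ever constructing an $n\times n$ intermediate matrix $C'$. A rooted eigenvector of $M$ exists by Lemma~\ref{le4.3} applied to the rooted matrix $M$. Set $v:=Su$, so $v_i=u_a$ whenever $i\in\pi_a$, and abbreviate the block row-sums $R^i_b:=\sum_{j\in\pi_b}c_{ij}$, so $\sum_{b=1}^\ell R^i_b=r_i$, the $i$-th row-sum of $C$. The key computation is
\begin{equation*}
(Cv)_i \;=\; \sum_{b=1}^{\ell} u_b R^i_b \;=\; \sum_{b=1}^{\ell-1}(u_b-u_\ell)R^i_b + u_\ell r_i,
\end{equation*}
obtained by using $R^i_\ell = r_i-\sum_{b<\ell}R^i_b$ to eliminate $R^i_\ell$ (for which no direct bound is available). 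Since $u$ is rooted, both $u_b-u_\ell\geq 0$ and $u_\ell\geq 0$, so the hypotheses $R^i_b\leq m_{ab}$ ($b<\ell$) and $r_i\leq\sum_c m_{ac}$ yield
\begin{equation*}
(Cv)_i \;\leq\; \sum_{b=1}^{\ell-1}(u_b-u_\ell)m_{ab} + u_\ell\sum_{c=1}^{\ell}m_{ac} \;=\; \sum_{b=1}^\ell u_b m_{ab} \;=\; (Mu)_a \;=\; \rho_r(M)\,v_i,
\end{equation*}
after a telescoping cancellation in the $u_\ell m_{a\ell}$ term. Thus $Cv\leq \rho_r(M)\,v$.

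\textbf{From the pointwise inequality to $\rho(C)\leq\rho_r(M)$.} If $u>0$ then $v>0$ and Theorem~\ref{th2.1}(ii) concludes. If $u$ has zero entries (necessarily $u_\ell=0$ since $u$ is rooted), I would perturb $M$ to $M_\varepsilon:=M+\varepsilon J_\ell$ for $\varepsilon>0$. Adding a constant matrix preserves rooted-ness (it affects no column rooted-ness order and only shifts row-sums uniformly), and $M_\varepsilon$ continues to dominate the block and row sums of $C$. Because the first $\ell-1$ entries of the last row of $M_\varepsilon$ are $\geq\varepsilon>0$, Lemma~\ref{le4.3}(i) guarantees that its rooted eigenvector $u_\varepsilon$ for $\rho_r(M_\varepsilon)$ is strictly positive, so the argument above gives $\rho(C)\leq\rho_r(M_\varepsilon)$. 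Letting $\varepsilon\to 0^+$ and invoking continuity of $\rho_r$ on the rooted matrix $M_\varepsilon$ (whose largest real eigenvalue can be read off from Lemma~\ref{le4.7}-type formulas, or more generally via $\rho_r(M_\varepsilon)=\rho(Q^{-1}(M_\varepsilon+dI)Q)-d$ for a fixed $d$) yields $\rho(C)\leq\rho_r(M)$.

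\textbf{Equality case, with $C$ irreducible.} Let $w^T>0$ be a positive left eigenvector of $C$ for $\rho(C)$, which exists by Theorem~\ref{th2.1}. Since $v=Su\neq 0$ and $w>0$, we have $w^Tv>0$, so multiplying $Cv\leq\rho_r(M)v$ by $w^T$ gives $\rho(C)\,w^Tv\leq\rho_r(M)\,w^Tv$. Assuming $\rho(C)=\rho_r(M)$, this is an equality, and because $w_i>0$ for every $i$, each scalar inequality $(Cv)_i\leq\rho_r(M)v_i$ must be sharp. Tracing back through the two nonnegativity insertions in the key computation, sharpness at index $i\in\pi_a$ is equivalent to
\begin{equation*}
(u_b-u_\ell)(m_{ab}-R^i_b)=0 \quad (1\leq b\leq\ell-1) \qquad\text{and}\qquad u_\ell\Bigl(\textstyle\sum_{c=1}^{\ell} m_{ac}-r_i\Bigr)=0,
\end{equation*}
which is the conjunction of (b) and (a). Conversely, assuming (a) and (b), I would reverse the same telescoping to obtain $(Cv)_i=\rho_r(M)v_i$ for all $i$, i.e.\ $Cv=\rho_r(M)v$; since $v\geq 0$ and $v\neq 0$, Theorem~\ref{th2.1}(iii) gives $\rho(C)\geq\rho_r(M)$, which combined with the upper bound already proved forces equality.

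\textbf{Main obstacle.} The principal subtlety is organizing the key computation so that every remaining coefficient in $(Cv)_i$ is nonnegative — this is exactly what makes the rooted-ness of $u$ (and, upstream, the rooted-ness assumption on $M$) the essential hypothesis, and what makes the sharpness analysis decouple cleanly into the separate conditions (a) and (b). The secondary technical point is the $u_\ell=0$ case in the upper bound without irreducibility, where one cannot appeal to a left Perron eigenvector; the perturbation $M\mapsto M+\varepsilon J_\ell$ (rather than perturbing $C$, which could destroy the partition structure) is the cleanest remedy.
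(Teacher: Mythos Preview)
Your proof is correct and takes a genuinely different route from the paper's. The paper constructs an auxiliary $n\times n$ matrix $C'$ with $\Pi$ as an equitable partition and $\Pi(C')=M$ (by padding entries of $C$ outside the last column to hit the block sums $m_{ab}$, and filling the last column to match the row sums of $M$), then applies its general machinery Theorem~\ref{th3.4}, which is itself Lemma~\ref{le3.1} specialized to $P=I$ and $Q=I+\sum_i E_{in}$. You instead bypass $C'$ and Theorem~\ref{th3.4} entirely: lifting $u$ to $v=Su$ and computing $(Cv)_i=\sum_{b<\ell}(u_b-u_\ell)R^i_b+u_\ell r_i$ makes the role of rooted-ness of $u$ completely transparent---it is precisely what renders every coefficient nonnegative so that the block-sum and row-sum hypotheses can be inserted. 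The paper's approach pays off by fitting into its broader $P,Q$-framework (e.g.\ Lemma~\ref{le3.1} allows non-nonnegative $C$ with a specified left eigenvector), whereas your approach is shorter, more self-contained, and yields the equality conditions (a), (b) by a one-line sharpness analysis rather than by translating Theorem~\ref{th3.4}(a),(b) through the construction of $C'$. Both proofs use the same perturbation $M\mapsto M+\varepsilon J_\ell$ to handle the case where the rooted eigenvector fails to be positive.
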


\begin{proof}
Rearranging the indeces of $C$ if necessary, we might assume $n \in \pi_\ell$.
We first consider the case that the row vector
$(m_{\ell 1}, m_{\ell 2}, \ldots, m_{\ell \ell-1})$  is positive.
 We construct an $n\times n$ matrix $C'$ such that the conditions in Theorem~\ref{th3.4}(i) hold, and
$\Pi$ is an equitable partition of $C'$ with $\Pi(C')=M$.
Indeed, $C'$ is obtained from $C$ by increasing entries in $C[n|n-1]$ such that each row of the submatrix $C'[\pi_a|\pi_b]$ of $C'$ has the same row-sum $m_{ab}$ for $1\leq a\leq \ell,1\leq b\leq \ell-1$, and the last column of $C'$ is filled  to make the $i$-th row-sum of $C'$ equals $\sum_{c=1}^\ell m_{ac}$ for each $i$, where $i\in\pi_a$.
By Lemma~\ref{le4.3} and Lemma~\ref{le4.4}, $\rho_r(M)$ is an eigenvalue of $C'$ with a positive and rooted eigenvector $v'$.
Let $v^T$ be a nonnegative left eigenvector of $C$ for $\rho(C)$. Then $v^Tv'>0$ and the conditions (i)-(iv) in Theorem~\ref{th3.4} hold.
From the conclusion of Theorem~\ref{th3.4} with $\lambda=\rho(C)$ and $\lambda'=\rho_r(M)$, we find  $\rho(C)\leq \rho_r(M)$.

In general,  let $\epsilon>0$ and $M_\epsilon:=M+\epsilon J_\ell$. Note that $M_\epsilon$ is rooted and $M[\{\ell\}|\{\ell\})$ is positive. Then by the argument above, we have $\rho(C)\leq \rho_r(M_\epsilon)$.
Hence  $$\rho(C)\leq \lim\limits_{\epsilon\rightarrow 0^+} \rho_r(M_\epsilon)= \rho_r(M)$$
by the continuity of the eigenvalues  \cite{e:82,np:94}.

For the second part, assume $C$ is irreducible. Then $C$ has a positive left eigenvector $v^T$ for $\rho(C)$. 
To prove the sufficiency  in the second part, assume that (a), (b) hold. Let $S$ be the charateristic matrix of $\pi$. Then by Lemma \ref{le4.4} $v'=(v'_1,\ldots,v'_n)^T=Su$ is a rooted eigenvector of above $C'$ for $\rho_r(M)$ and clearly Theorem \ref{th3.4} (a) holds. 
for $1\leq i\leq n,1\leq j\leq n-1$ with $v'_j>v'_n$. Let $i\in\pi_a$ and $j\in\pi_b$. Then $u_b=v'_j>v'_n=u_\ell$ and $\sum_{j'\in\pi_b}c_{ij'}=m_{ab}=\sum_{j'\in\pi_b}c'_{ij'}$ by (b) here. Hence for $j'\in\pi_b$, $c_{ij'}=c'_{ij'}$  since $c_{ij'}\leq c'_{ij'}$, and $c_{ij}=c'_{ij}$. 
Thus Theorem~\ref{th3.4} (b) holds and $\rho(C)=\rho_r(M)$ by Theorem~\ref{th3.4}.

To prove the necessity, assume that $\rho(C)=\rho_r(M)$.
Then clearly Theorem 3.4 (a) implies (a) here since $v'_n=u_\ell$.  For $1\leq a\leq \ell,1\leq b\leq \ell-1,i\in\pi_a \hbox{~with~} u_b> u_\ell$, from Theorem 3.4 (b), we have $\sum_{j\in\pi_b}c_{ij}=\sum_{j\in\pi_b}c'_{ij}=m_{ab}$. Then (b) holds and the proof is completed.
\end{proof}

To charaterise when the equality holds in Theorem \ref{th5.1}, we need certain information about the eigenvector of $M$. 
With additional conditions, we now give an eigenvector-free version of Theorem \ref{th5.1}. A vector $(v_1, v_2, \ldots, v_\ell)$ is {\it strictly rooted} if $v_i>v_\ell>0$ for every $i\leq \ell$. 
\begin{cor}\label{cor5.1'}
Let $M=(m_{ab})$ be an $\ell\times \ell$ rooted matrix.  If $C=(c_{ij})$ is an $n\times n$ nonnegative matrix and there exists a partition $\Pi=(\pi_1, \pi_2, \ldots, \pi_\ell)$ of $[n]$ such that
$$\max_{i\in \pi_a} \sum_{j\in \pi_b} c_{ij}\leq m_{ab} \quad \hbox{and} \quad \max_{i\in \pi_a} \sum_{j=1}^n c_{ij}\leq \sum_{c=1}^\ell m_{ac}$$
for $1\leq a\leq \ell$ and $1\leq b\leq \ell-1$, then $\rho(C)\leq \rho_r(M).$
Moreover, if $C$ is irreducible, the row vector
$(m_{\ell 1}, m_{\ell 2}, \ldots, m_{\ell\ell-1})$  is positive, and the row-sum vector of $M$ is strictly rooted,
 then $\rho(C)= \rho_r(M)$ if and only if $\Pi$ is an equitable partition of $C$ and $\Pi(C)=M$.
\end{cor}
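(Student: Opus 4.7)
The inequality $\rho(C)\leq \rho_r(M)$ is precisely the conclusion of Theorem~\ref{th5.1}, so only the moreover clause needs attention. My plan is to invoke the equality half of Theorem~\ref{th5.1}, whose conditions (a) and (b) are stated in terms of a rooted eigenvector $u=(u_1,\ldots,u_\ell)^T$ of $M$ for $\rho_r(M)$, and then use the two extra hypotheses (positive last row of $M$, strictly rooted row-sum vector of $M$) to pin down $u$ precisely enough that those conditions collapse into the full system of equitability equations.

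First I would extract eigenvector information from $M$. Since $M$ is rooted, Lemma~\ref{le4.3} supplies a rooted eigenvector $u$ of $M$ for $\rho_r(M)$. Positivity of the row $(m_{\ell 1}, m_{\ell 2}, \ldots, m_{\ell\ell-1})$ triggers Lemma~\ref{le4.3}(i), so $u>0$ and in particular $u_\ell>0$. The strictly rooted row-sum hypothesis says $r_a>r_\ell>0$ for every $a\leq \ell-1$, so Lemma~\ref{le4.3}(ii) upgrades this to $u_a>u_\ell$ for every such $a$. Thus $u_\ell\neq 0$ and $u_b>u_\ell$ holds for every $b\leq \ell-1$ --- precisely the hypotheses under which conditions (a) and (b) of Theorem~\ref{th5.1} trigger on all indices.

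With these strict inequalities, the necessity direction proceeds as follows: assuming $\rho(C)=\rho_r(M)$, Theorem~\ref{th5.1}(a) yields $\sum_{j=1}^n c_{ij}=\sum_{c=1}^\ell m_{ac}$ for all $a$ and $i\in \pi_a$, while Theorem~\ref{th5.1}(b) yields $\sum_{j\in \pi_b} c_{ij}=m_{ab}$ for all $a$, all $b\leq \ell-1$, and all $i\in \pi_a$. Subtracting the $b\leq \ell-1$ block sums from the total row sum recovers the missing identity $\sum_{j\in \pi_\ell} c_{ij}=m_{a\ell}$, and these equations together are exactly the statement that $\Pi$ is an equitable partition of $C$ with $\Pi(C)=M$. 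For the converse, if $\Pi(C)=M$ equitably then (a) and (b) hold trivially, so Theorem~\ref{th5.1} delivers $\rho(C)=\rho_r(M)$. The only nontrivial step is the eigenvector analysis in the preceding paragraph, and I foresee no obstacle there because the definitions of ``positive last row'' and ``strictly rooted row-sum vector'' are tailored exactly to the hypotheses of Lemma~\ref{le4.3}(i)--(ii).
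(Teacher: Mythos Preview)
Your proposal is correct and follows essentially the same route as the paper: both use Lemma~\ref{le4.3}(i)--(ii) to show the rooted eigenvector $u$ of $M$ is strictly rooted, then feed this into the equality characterization of Theorem~\ref{th5.1} so that conditions (a) and (b) fire on every index and force full equitability with $\Pi(C)=M$. The only cosmetic difference is in the sufficiency direction, where the paper cites Proposition~\ref{pro2.6} directly rather than going back through Theorem~\ref{th5.1}; either works.
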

\begin{proof}
It remians to prove the second part. 
To prove the sufficiency, assume that $\Pi$ is an equitable partition of $C$ and $\Pi(C)=M$.
Then $\rho(C)=\rho(M)= \rho_r(M)$ by Proposition~\ref{pro2.6}.

To prove the necessity, assume that $C$ is irreducible, the row vector $(m_{\ell 1}, m_{\ell 2}, \ldots, m_{\ell \ell-1})$  is positive, the row-sum vector of $M$ is strictly rooted, and $\rho(C)=\rho_r(M)$.
Then $M$ has a strictly rooted eigenvector $u=(u_1,$ $u_2,$ $\ldots,$ $u_\ell)^T$ for $\rho_r(M)$ by Lemma~\ref{le4.3}.
By the second part of Theorem \ref{th5.1}, we have that the row-sum vectors of $C$ and $C'$ are equal and $C[n|\pi_\ell)=C'[n|\pi_\ell)$. Hence $\Pi(C)=\Pi(C')=M$.
\end{proof}


We provide an example to explain Theorem~\ref{th5.1} and its proof.

\begin{exam}
Consider the following matrices $C$, $C'$ and $M$ from left to right appearing in the assumption and proof of Theorem \ref{th5.1}:
$$\left(
\begin{tabular}{ccc|cc|cc} 2 & 1 & 3 & 3 & 3 & 12 & 0\\ 4 & 2 & 1 & 4 & 2 & 6 & 4\\ 2 & 3 & 1 & 4 & 1 & 8 &3 \\
\hline
3 & 5 & 3 & 1 & 1& 3 & 4\\ 5 & 6 & 1 & 1 & 0 & 3 &3\\
\hline
0 & 2 & 1 & 2 & 2 & 6 & 0\\  2 & 2 & 0 & 2 &1& 1 &4\end{tabular}\right),
  \left(\begin{tabular}{ccc|cc|cc} 2 & 2 & 3 & 3 & 3 & 12 & -1\\ 4 & 2 & 1 & 4 & 2 & 6 & 5\\ 2 & 3 & 2 & 4 & 2 & 8 & 3 \\ \hline
 4& 5 & 3 & 1 & 1& 3 & 3\\
 5 & 6 & 1 & 1 & 1 & 3 &3\\
 \hline 1 & 2 & 1 & 2 & 2 & 6 & -1\\  2 & 2 & 0 & 2 &2& 1 &4\end{tabular}\right), \begin{pmatrix} 7 & 6 & 11 \\ 12 & 2 & 6\\ 4& 4 & 5\end{pmatrix},$$
with corresponding row-sum vectors
$$(24,23,22|20,19|13,12),\quad(24,24,24|20,20|13,13),\quad (24,20,13),$$
where the separating lines are according to the equitable partition  $\Pi=\{\{1, 2, 3\}, \{4, 5\}, \{6, 7\}\}$ of the middle matrix $C'$.
Notice that $C'$ is not rooted, so we can not apply Lemma \ref{le4.3} and then Theorem~\ref{th3.4} directly.
Since $M$ is rooted and by Theorem~\ref{th5.1}, we have
$\rho(C)\leq \rho_r(M)\approx 18.6936.$
If we apply Lemma~\ref{le2.2} with $C'$ the following nonnegative matrix $C^*$ using the same equitable partition $\Pi$:
$$C^*=\left(
\begin{tabular}{ccc|cc|cc} 2 & 2 & 3 & 3 & 3 & 12 & 0\\ 4 & 2 & 1 & 4 & 2 & 6 & 6\\ 2 & 3 & 2 & 4 & 2 & 8 &4 \\
\hline
4 & 5 & 3 & 1 & 1& 3 & 4\\ 5 & 6 & 1 & 1 & 1 & 3 &4\\
\hline
 1& 2 & 1 & 2 & 2 & 6 & 0\\  2 & 2 & 0 & 2 &2& 2 &4\end{tabular}\right), \qquad \Pi(C^*)=\begin{pmatrix} 7 & 6 & 12 \\ 12 & 2& 7\\ 4& 4 & 6\end{pmatrix},$$
then the upper bound
$\rho(C^*)=\rho (\Pi(C^*))\approx 19.4$ of $\rho(C)$  is larger than the previous one.
\end{exam}

Theorem~\ref{th5.4}, due to X. Duan and B. Zhou  \cite{dz:13}, generalizes the results in  \cite{bh:85, cls:13, h:98, hsf:01, lw:13, s:87, sw:04} and
relates to the  results in  \cite{hw:14, lw:15, s:87}. Theorem~\ref{th5.4} can be easily reproved  by using Corollary \ref{cor5.1'}.

\begin{thm}[\cite{dz:13}]\label{th5.4}
Let $C=(c_{ij})$ be a nonnegative $n\times n$ matrix with  row-sums $r_1\geq r_2\geq \cdots \geq r_n$,  $f:=\max_{1\leq i\not=j\leq n} c_{ij}$ and $d:=\max_{1\leq i\leq n} c_{ii}$. Then
\begin{equation}\label{e5.1}\rho(C) \leq \frac{r_{\ell}+d-f+\sqrt{(r_{\ell}-d+f)^{2}+4f\sum_{i=1}^{\ell-1}(r_i-r_\ell)}}{2}\end{equation}
for $1 \leq \ell \leq n.$
Moreover, if $C$ is irreducible, then the equality holds in (\ref{e5.1}) if and only if $r_1=r_n$ or for  $1\leq t\leq \ell$ with $r_{t-1}\not=r_t=r_\ell$, we have $r_t=r_n$ and
$$c_{ij}=\left\{
           \begin{array}{ll}
             d, & \hbox{if $i=j\leq t-1$;} \\
             f, & \hbox{if $i\not=j$ and $1\leq i\leq n$, $1\leq j\leq t-1.$}
           \end{array}
         \right.$$
\end{thm}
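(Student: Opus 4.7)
The plan is to apply Corollary~\ref{cor5.1'} to the $\ell\times\ell$ rooted matrix $M=M_\ell$ of the form \eqref{e4.2} with $f_1=f_2=f$ and prescribed row-sums $r_1,\ldots,r_\ell$, paired with the coarse partition $\Pi=\{\{1\},\{2\},\ldots,\{\ell-1\},\{\ell,\ldots,n\}\}$ of $[n]$. Explicitly $M$ has diagonal entries $d$ in its first $\ell-1$ rows, off-diagonal entries $f$ throughout the first $\ell-1$ columns and the last row, and its last column forced by the requirement that the $a$-th row-sum of $M$ equal $r_a$; a direct check confirms that $M$ is rooted. Under these choices the hypotheses of Corollary~\ref{cor5.1'} reduce to $c_{ii}\leq d$ and $c_{ij}\leq f$ for $i\neq j$ (immediate from the definitions of $d$ and $f$) together with $r_a\leq\sum_c m_{ac}=r_a$ for $a<\ell$ and $\max_{i\geq\ell}r_i=r_\ell$ (immediate from the ordering $r_1\geq\cdots\geq r_n$). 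Thus $\rho(C)\leq\rho_r(M_\ell)$, and Lemma~\ref{le4.7}(i) applied with $f_1=f_2=f$ evaluates $\rho_r(M_\ell)$ to exactly the right-hand side of \eqref{e5.1}.

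Assume now that $C$ is irreducible. The degenerate case $r_1=r_n$ is immediate: $C$ has constant row-sum $r_1$, so $\rho(C)=r_1$ by Lemma~\ref{le2.3}, while the right-hand side of \eqref{e5.1} collapses to $r_\ell=r_1$ since $\sum_{i<\ell}(r_i-r_\ell)=0$ and $r_\ell=r_1\geq d$. In the remaining case $r_1>r_n$, let $t$ be the smallest index with $r_t=r_\ell$. By Lemma~\ref{le4.7}(iii), $\rho_r(M_t)=\rho_r(M_\ell)$, so I re-apply Corollary~\ref{cor5.1'} with the pair $(M_t,\Pi_t)$, where $\Pi_t=\{\{1\},\ldots,\{t-1\},\{t,\ldots,n\}\}$. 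The extra hypotheses of the equality characterization now all hold: irreducibility forces $f>0$ (else $C$ would be diagonal), so the row $(m_{t,1},\ldots,m_{t,t-1})=(f,\ldots,f)$ of $M_t$ is positive; and the row-sum vector $(r_1,\ldots,r_t)$ of $M_t$ is strictly rooted because $r_{t-1}>r_t$ by the choice of $t$ and $r_t>0$ (an irreducible matrix of order at least $2$ has no zero row). Hence $\rho(C)=\rho_r(M_t)$ if and only if $\Pi_t(C)=M_t$. Unpacking this identity together with the equitability of $\Pi_t$ yields exactly the claimed entry conditions $c_{ii}=d$ for $i\leq t-1$ and $c_{ij}=f$ for $j\leq t-1$, $i\neq j$, together with $r_t=r_{t+1}=\cdots=r_n$, i.e., $r_t=r_n$. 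The residual subcase $t=1$ with $r_1>r_n$ is ruled out: there the bound would force $\rho(C)=r_\ell=r_1$, contradicting the strict form of Lemma~\ref{le2.3}.

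The main obstacle is essentially bookkeeping: translating $\Pi_t(C)=M_t$ and the equitability of $\Pi_t$ into the componentwise and row-sum equalities stated in the theorem. Everything else is a direct invocation of Corollary~\ref{cor5.1'} together with the spectral formula in Lemma~\ref{le4.7}.
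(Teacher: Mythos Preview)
Your proposal is correct and follows essentially the same approach as the paper's own proof: apply Corollary~\ref{cor5.1'} with $M=M_\ell$ from \eqref{e4.2} (taking $f_1=f_2=f$) and the partition $\Pi=\{\{1\},\ldots,\{\ell-1\},\{\ell,\ldots,n\}\}$, read off the bound via Lemma~\ref{le4.7}, and for the equality characterization pass to the least $t$ with $r_t=r_\ell$ using Lemma~\ref{le4.7}(iii) before invoking the second part of Corollary~\ref{cor5.1'}. Your write-up simply supplies the hypothesis checks and edge cases (the $r_1=r_n$ collapse and the exclusion of $t=1$ when $r_1>r_n$) that the paper leaves implicit.
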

\begin{proof}
The inequality in (\ref{e5.1}) is obtained  by applying Corollary~\ref{cor5.1'} with $\Pi=\{\{1\}, \{2\}, \ldots, \{\ell-1\}, \{\ell, \ell+1, \ldots, n\}\}$ and
$M=\Pi(C'),$ where $C'=M_\ell$ is the matrix described in (\ref{e4.2}) with $n=\ell$, $f_1=f_2=f$.
For the equality case, we apply Lemma~\ref{le4.7}(iii) and the second part of Corollary~\ref{cor5.1'} by choosing the least $t$ such that $r_t=r_\ell$.
\end{proof}

\begin{rem}
From our method in Corollary~\ref{cor5.1'}, the assumptions  $f$ and $d$ in Theorem~\ref{th5.4} can be replaced by smaller numbers $f=\max_{1\leq i\leq n,1\leq j\leq \ell-1,i\ne j} c_{ij}$ and $d=\max_{1\leq i\leq \ell-1} c_{ii}$, respectively. Furthermore, by Lemma \ref{le4.7}(i) the upper bound could be replaced by
\begin{align*}&\frac{1}{2}(r_n+d-f_2+(n-2)(f_1-f_2))\\ &+\frac{1}{2}\sqrt{(r_n-d+f_2-(n-2)(f_1-f_2))^2+4f_2\sum\limits_{i=1}^{n-1}(r_i-r_n)},
\end{align*}
where $$f_1=\max_{i\in [n], j\in [\ell-1],i\ne j} c_{ij},~ f_2=\max_{\ell\leq i\leq n,j\in [\ell-1]} c_{ij},~d=\max_{i\in [\ell-1]} c_{ii}.$$
\end{rem}

The following is the dual theorem of Corollary~\ref{cor5.1'}, but its proof is not completely dual.

\begin{thm}
Let $M=(m_{ab})$ be an $\ell\times \ell$ rooted matrix. If $C=(c_{ij})$ is an $n\times n$ nonnegative matrix and there exists a partition $\Pi=(\pi_1, \pi_2, \ldots, \pi_\ell)$ of $[n]$ such that
$$\max_{i\in \pi_a} \sum_{j\in \pi_b} c_{ij}\geq m_{ab}   \quad \hbox{and} \quad \max_{i\in \pi_a} \sum_{j=1}^n c_{ij}\geq \sum_{c=1}^\ell m_{ac}$$
for $1\leq a\leq \ell$ and $1\leq b\leq \ell-1$, then $\rho(C)\geq \rho_r(M).$
Moreover, if $C$ is irreducible, the row vector
$(m_{\ell 1}, m_{\ell 2}, \ldots, m_{\ell \ell-1})$  is positive, and the row-sum vector of $M$ is strictly rooted,
 then $\rho(C)= \rho_r(M)$ if and only if $\Pi$ is an equitable partition of $C$ and $\Pi(C)=M$.
\end{thm}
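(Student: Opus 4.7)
My plan is to prove this theorem as the dual of Corollary~\ref{cor5.1'}, substituting Theorem~\ref{th3.5} for Theorem~\ref{th3.4} and reversing the direction of the entry modifications. After reindexing so that $n \in \pi_\ell$, I would construct an $n \times n$ matrix $C' = (c'_{ij})$ by \emph{decreasing} (rather than increasing) entries of $C$: for each block $(a,b)$ with $b \leq \ell-1$, choose $c'_{ij} \leq c_{ij}$ (for $i \in \pi_a, j \in \pi_b$) so that each row of $C'[\pi_a|\pi_b]$ sums to exactly $m_{ab}$; leave $c'_{ij} = c_{ij}$ for $j \in \pi_\ell \setminus \{n\}$; and set $c'_{in}$ so that the total $i$-th row-sum of $C'$ is $\sum_c m_{ac}$. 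The hypotheses make this construction feasible, and they force $C'[n|n-1] \leq C[n|n-1]$ together with $r_i \geq r'_i = \sum_c m_{ac}$. By construction $\Pi$ is an equitable partition of $C'$ with $\Pi(C') = M$, the block $(a,\ell)$ row-sum of $C'$ coming out to $m_{a\ell}$ automatically from the total row-sum.

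Since $M$ is rooted, Lemma~\ref{le4.4} supplies a rooted eigenvector $v' = Su$ of $C'$ for $\rho_r(M)$, where $u$ is a rooted eigenvector of $M$ for $\rho_r(M)$ and $S$ is the characteristic matrix of $\Pi$. Letting $v^T$ be any nonnegative left eigenvector of $C$ for $\rho(C)$, the remaining ingredient for Theorem~\ref{th3.5} is $v^T v' > 0$. In Theorem~\ref{th5.1} this was arranged by perturbing $M$ to $M + \varepsilon J_\ell$, making $v'$ strictly positive. The analogous perturbation in the lower-bound setting (adding to the last row of $M$ to make it positive) would tighten the stated inequalities and may not remain consistent with them; instead I would perturb $C$, replacing it by $C_\varepsilon := C + \varepsilon J_n$. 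Then $C_\varepsilon$ is irreducible, so its nonnegative left eigenvector $v_\varepsilon^T$ is strictly positive, whence $v_\varepsilon^T v' > 0$ since $v' \geq 0$ is nonzero. The hypotheses are preserved for $C_\varepsilon$ (partial row-sums only grow), so the construction above yields $\rho(C_\varepsilon) \geq \rho_r(M)$, and letting $\varepsilon \to 0^+$ together with continuity of eigenvalues \cite{e:82,np:94} gives $\rho(C) \geq \rho_r(M)$. This detour through $C$ rather than $M$ is the sense in which the proof is ``not completely dual.''

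For the equality characterization, assume $C$ is irreducible, $(m_{\ell 1}, \ldots, m_{\ell\,\ell-1})$ is positive, the row-sum vector of $M$ is strictly rooted, and $\rho(C) = \rho_r(M)$. By Lemma~\ref{le4.3}(i), (ii) the rooted eigenvector $u$ of $M$ is strictly rooted, so $v'_n = u_\ell > 0$ and $v'_j > v'_n$ for $j \notin \pi_\ell$. Irreducibility of $C$ gives $v^T > 0$, so the equality clauses (a) and (b) of Theorem~\ref{th3.5} collapse to $c_{ij} = c'_{ij}$ for all $i \in [n]$ and $j \notin \pi_\ell$, together with $r_i = r'_i$ for all $i$; subtraction yields $\sum_{j \in \pi_\ell} c_{ij} = m_{a\ell}$ for $i \in \pi_a$. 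Hence $\Pi$ is an equitable partition of $C$ with $\Pi(C) = M$. Conversely, if $\Pi(C) = M$, then $M$ is nonnegative, so $\rho_r(M) = \rho(M) = \rho(C)$ by Proposition~\ref{pro2.6}. The principal obstacle I foresee is precisely this verification of $v^T v' > 0$ without the positive-$v'$ shortcut used for the upper bound, which the perturbation of $C$ is designed to circumvent.
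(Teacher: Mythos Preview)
Your argument is correct, and for the equality characterization it is essentially the paper's ``dual proof of Corollary~\ref{cor5.1'}.'' For the inequality $\rho(C)\geq\rho_r(M)$, however, the paper takes a shorter route that avoids Theorem~\ref{th3.5} and any perturbation altogether. After constructing $C'$ by decreasing entries so that $CQ\geq C'Q$ and $\Pi(C')=M$, the paper observes that the rooted eigenvector $v'=Qu$ of $C'$ for $\rho_r(M)$ (with $u\geq 0$) satisfies
\[
Cv'=CQu\geq C'Qu=C'v'=\rho_r(M)\,v',
\]
and since $v'$ is nonnegative and nonzero, Theorem~\ref{th2.1}(iii) (Perron--Frobenius) immediately gives $\rho(C)\geq\rho_r(M)$. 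The asymmetry that makes this work is that Theorem~\ref{th2.1}(iii) only requires a nonnegative test vector, whereas Theorem~\ref{th2.1}(ii) requires a positive one; this is precisely the sense in which the paper's proof is ``not completely dual.'' Your perturbation $C_\varepsilon=C+\varepsilon J_n$ is a valid alternative that stays within the Theorem~\ref{th3.5} framework, at the cost of an extra limiting argument.
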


\begin{proof}
The second part of the statement follows from a dual proof of  Corollary~\ref{cor5.1'}.
To prove the first part, without assuming the row vector
$(m_{\ell 1},$ $m_{\ell 2},$ $\ldots,$ $m_{\ell \ell-1})$ being positive and
referring to (\ref{e3.5}), we construct $C'$ similarly as in the proof of Theorem \ref{th5.1} by changing the operation of increasing entries to decreasing entries in $C[n|n-1]$ to have $CQ\geq C'Q\geq 0$ and $\Pi(C')=M$. Since $M$ is rooted, there is a  rooted eigenvector $v'$   of $C'$ for $\lambda'=\rho(M)$ by Lemma~\ref{le4.4}.
Then $u=Q^{-1}v'$ is nonnegative and we have $Cv'=CQu\geq C'Qu= C'v'=\lambda'v'.$
Since $v'$ is nonnegative,  $\rho(C)\geq \rho(M)$ by Theorem~\ref{th2.1}(iii).
\end{proof}

\section{Applications}\label{s6}

We provide three applications of our matrix realization of spectral bounds in this section.

\subsection{The proof of Conjecture~C}\label{s6.1}

Throughout this subsection, we assume $e=c^2+t,$ where $2\leq t\leq 2c.$
Recall that $\mathscr{S}(n,e)$ is the set of $n\times n$ $(0, 1)$-matrices having exactly $e$ $1$'s.
Let $\mathscr{S}^*(n,e)$ denote the subset of $\mathscr{S}(n,e)$ which collects matrices $A=(a_{ij})$ satisfying
$$\text{if $a_{ij}=1$, then $a_{hk}=1$ for $h\leq i$, $k\leq j$}.$$
We need the following lemma.

\begin{lem}[\cite{f:85}]\label{le6.1}
If $A\in \mathscr{S}(n,e)$ attains the maximum spectral radius, then there exists a permutation matrix $P$ such that $PAP^T\in \mathscr{S}^*(n,e)$ and $PAP^T$ has the form
$A[k|k]\oplus O_{n-k}$ for some $k$ and the submatrix $A[k|k]$ is irreducible.
\end{lem}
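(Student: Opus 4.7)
The plan has three phases: reduce to a single irreducible principal block via Frobenius normal form, use Lemma~\ref{le2.2} to force every $1$ of $A$ into that block, and rearrange within the block to reach the Young-diagram pattern defining $\mathscr{S}^*(n,e)$. Throughout I exploit two facts: simultaneous row/column permutations preserve $\rho$, and adding a $1$ to an irreducible nonnegative matrix strictly increases $\rho$ (Lemma~\ref{le2.2}).

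First, I put $A$ in Frobenius normal form, a block upper triangular matrix whose diagonal blocks are either irreducible or $1\times 1$ zero. Then $\rho(A)$ is the maximum spectral radius among these diagonal blocks, so some irreducible block $B=A[\alpha|\alpha]$ satisfies $\rho(B)=\rho(A)$; after relabelling, take $\alpha=[k]$ with $k=|\alpha|$. Next I claim every $1$ of $A$ lies in $A[k|k]$. Suppose not. If $A[k|k]$ still has a $0$ entry, move a stray $1$ from outside into that position to form $A'$; then $A'[k|k]$ strictly dominates $A[k|k]$ and remains irreducible, so Lemma~\ref{le2.2} gives $\rho(A')\geq \rho(A'[k|k])>\rho(A[k|k])=\rho(A)$, contradicting extremality. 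If instead $A[k|k]=J_k$, then $\rho(A)=k$ and $k^2<e=c^2+t\leq c^2+2c<(c+1)^2$ force $k\leq c$; but (\ref{e1.1}) lies in $\mathscr{S}(n,e)$, and for $t\geq 2$ its $(c+1)\times(c+1)$ top-left principal block is irreducible and strictly dominates $J_c\oplus O_1$ entrywise, so Lemma~\ref{le2.2} gives $\rho((\ref{e1.1}))>c\geq k=\rho(A)$, again contradicting extremality. Hence $PAP^T=A[k|k]\oplus O_{n-k}$ with $A[k|k]=B$ irreducible.

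For the final phase, I arrange $A[k|k]$ in the staircase pattern of $\mathscr{S}^*(n,e)$. The restriction to simultaneous rather than independent row/column permutations makes naive sorting unavailable (the sorted row-sum sequence is a simultaneous-permutation invariant), so instead one invokes the classical eigenvector-based rearrangement of Schwarz and Friedland, which shows that any non-Young configuration can be replaced by a Young one of at least as large spectral radius, forcing the extremal $A[k|k]$ to admit a simultaneous permutation into $\mathscr{S}^*$ form. The main obstacle of the plan is precisely this last rearrangement: the first two phases flow directly from Lemma~\ref{le2.2}, but here one must argue with the Perron eigenvectors of $B$ to justify each index swap, and this is the technical heart of Friedland's argument.
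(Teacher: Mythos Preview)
The paper does not prove Lemma~\ref{le6.1}; it is stated with a citation to Friedland~\cite{f:85} and used as a black box. So there is no in-paper argument to compare your proposal against.

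Your first two phases are sound. The Frobenius-form reduction and the Lemma~\ref{le2.2} argument correctly force all $1$'s of an extremal $A$ into a single irreducible principal block. One small omission: you should first dispose of the possibility $\rho(A)=0$, in which case every Frobenius diagonal block is a $1\times 1$ zero and no irreducible block exists; but this is trivial, since for $e\geq 1$ any matrix with a diagonal $1$ already has $\rho\geq 1$. Your case split in phase~2 is clean, and the use of the matrix~(\ref{e1.1}) to handle $A[k|k]=J_k$ is correct under the standing assumption $2\leq t\leq 2c$ of Section~\ref{s6.1}.

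Phase~3, however, is the entire substance of Friedland's lemma, and you do not prove it---you invoke Schwarz and Friedland. That is exactly what the paper does as well. So in effect your write-up and the paper's treatment coincide: both defer the rearrangement step to~\cite{f:85}. If your goal was a self-contained proof, phase~3 remains to be written out; the one-line summary you give (``any non-Young configuration can be replaced by a Young one of at least as large spectral radius'') hides the genuine difficulty you yourself flag, namely that a simultaneous permutation sorting the right Perron vector need not sort the left one, and the swap argument must be organized carefully to control both.
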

\bigskip

\noindent{\it Proof of Conjecture C.}

Let $A_0$ be the matrix in (\ref{e1.1}), and $A'_0$ be the matrix in (\ref{e1.2}) in the case $t=2c-3$. Observe that $A_0, A'_0\in \mathscr{S}^*(n,e)$.
To prove Conjecture C,  by using Lemma~\ref{le6.1}, we only need to  show $\rho(A)<\rho(A_0)$  for every $A\in \mathscr{S}^*(n,e)-\{A_0, A_0^T\}$ if $t\not=2c-3$; and $\rho(A)<\rho(A_0)$ for every $A\in \mathscr{S}^*(n,e)-\{A_0, A_0^T, A'_0\}$ and $\rho(A_0)<\rho(A'_0)$ if $t=2c-3$.
Fix a matrix $A\in \mathscr{S}^*(n,e)-\{A_0,A_0^T\}$. By considering $A^T$ if necessary, we might assume that
the number of $1$'s in $A[c|c)$ is no larger than that in $A(c|c]$.
Let $(r_1, r_2, \ldots, r_n)$ denote the row-sum vector of $A$. Since $e\geq c^2+2$, $A[c+1~|~c+1]$ is irreducible by Lemma~\ref{le6.1}, so $0<r_{c+1}<\max (c+1, t)$.  Let
$s:=r_{c+1}$. Applying Corollary~\ref{cor5.1'} with $\ell=c+1,$ $C=A$, partition $\Pi=\{\{1\}$, $\{2\}$, $\ldots,$ $\{c\}$,  $\{c+1, c+2,\ldots, n\}\},$ and the following $(c+1)\times (c+1)$ rooted matrix
\begin{equation*}
M=\begin{pmatrix}
J_s & J_{s\times (c-s)}                 &\begin{array}{c} r_1-c\\ r_2-c \\ \vdots   \end{array} \\
J_{(c-s)\times s}      & J_{c-s}  &\begin{array}{c}     \\  r_c-c\end{array}\\
J_{1\times s} &  O_{1\times (c-s)}               & 0
\end{pmatrix},
\end{equation*} we have $\rho(A)\leq \rho_r(M)$.
 To find $\rho_r(M)$, we consider the partition
$\Pi_1=\{\{1,2,\ldots,s\}$, $\{s+1, \ldots, c\}$, $\{c+1\}\}$ of $[c+1]$, and  observe that  $\Pi_1$ is an equitable partition of $M^T$. According to $s=c$ or $s<c$, the equitable quotient matrix $\Pi_1(M^T)$ has one of the following two forms
\begin{equation}\label{e6.1}
\begin{pmatrix}
c& 1\\
a & 0
\end{pmatrix}, \quad
\begin{pmatrix}
s & c-s   & 1 \\
s   & c-s & 0 \\
a   &  b    & 0
\end{pmatrix},
\end{equation}
respectively, where $a=\sum_{i=1}^s (r_i-c), \quad b=\sum_{i=s+1}^c (r_i-c).$
Observe that the first matrix is the {\it degenerated} case of the second: the $-1$ value and the two eigenvalues of the first matrix form the three eigenvalues of the second matrix in the special case $s=c$ and $b=0$.

Note that $A=A_0$ implies $s=\lceil\frac{t}{2}\rceil$, $a=\lfloor\frac{t}{2}\rfloor$ and $b=0$. The converse is also true: Notice that $$a+b+s+\sum_{i=c+2}^n r_i=t$$ holds generally in $A$.
The conditions $s=\lceil\frac{t}{2}\rceil$, $a=\lfloor\frac{t}{2}\rfloor$ and $b=0$ imply
$A(c+1|c]=O_{(n-c-1)\times c}$ and $A[\{s+1, s+2, \ldots, c\}|c)=O_{(c-s)\times (n-c)}$, and $A(c+1|c]=O_{(n-c-1)\times c}$ also implies
$A[c|c+1)=O_{c\times (n-c-1)}$ from the shape of $A$ described in Lemma~\ref{le6.1}, so $A=A_0$ follows.

We will provide important constraints between $s,a, b$ and $t$. Let $r$ denote the number of zeros in $A[c|c].$ Then $a+b+r$ is  the number of $1$'s in $A[c|c)$. From the assumption in the beginning, we have  $a+b+r \leq (e-(c^2-r))/2=(t+r)/2$.
Since the integer $a$ is at most the number of $1$'s in $A[s|c)$, we have
\begin{equation}\label{e6.2}
a\leq a+b+r\leq (t+r)/2.
\end{equation}
 In particular, $a+b\leq (t-r)/2$ and
\begin{equation}\label{e6.3}
2a+b\leq t.
\end{equation}
Since the number of $1$'s in $A(c|s]$ is $e-(c^2-r)-(a+b+r)=t-a-b$, we have
\begin{equation}\label{e6.4}
 s\leq t-a-b.
\end{equation}
Since $\rho_r(M)=\rho_r(\Pi_1(M^T))$ by Lemma~\ref{le4.5}, it suffices to show $\rho(\Pi_1(M^T))< \rho(A_0)$. The characteristic polynomial of $\Pi_1(M^T)$ in (\ref{e6.1}) is $f(x)/(x+1)$ or $f(x)$
according to $s=c$ or $s<c$, where
\begin{equation}\label{e6.45}
f(x)=x^3-cx^2-ax+a(c-s)-sb.
\end{equation}
We use Calculus to study the shape of the polynomial $f(x)$. The derivative of $f(x)$ is
$f'(x)=3x^2-2cx-a.$ If $x>c$,  then
$$f'(x)> c(3c-2c)-a=c^2-a\geq c^2-\frac{t+r}{2}\geq c^2-\frac{2c+(c-1)^2}{2}\geq 0$$
by (\ref{e6.2}).
Hence $f(x)$ is increasing in the interval $(c, \infty).$  Since $\rho(A_0)>\rho(J_c)=c$, to prove $\rho_r(M)<\rho(A_0)$, it suffices to show that $f(\rho(A_0))>0$.
Setting $s=a=\left\lceil\frac{t}{2}\right\rceil$ and $b=0$ in (\ref{e6.45}),
$\rho(A_0)$ is the largest zero of
\begin{equation}\label{e6.5}
g(x):=x^3-cx^2-\left\lfloor\frac{t}{2}\right\rfloor x+\left\lfloor\frac{t}{2}\right\rfloor\left(c-\left\lceil\frac{t}{2}\right\rceil\right).
\end{equation}
Then
\begin{align}\label{e6.6}
f(\rho(A_0))=&f(\rho(A_0))-g(\rho(A_0)) \nonumber \\
            =&\left(\left\lfloor \frac{t}{2}\right\rfloor-a\right) (\rho(A_0)-c) -s\left(a+b\right)+ \left\lfloor\frac{t}{2}\right\rfloor \left\lceil\frac{t}{2}\right\rceil.
\end{align}

If $a\leq \left\lfloor \frac{t}{2}\right\rfloor$, then  we immediately have $f(\rho(A_0))\geq 0$ from (\ref{e6.6}), since $s+a+b\leq t$ in (\ref{e6.4}) implies $s(a+b)\leq \left\lfloor\frac{t}{2}\right\rfloor \left\lceil\frac{t}{2}\right\rceil$; and indeed $f(\rho(A_0))> 0$, since  $f(\rho(A_0))= 0$ only happens when
 $a=\left\lfloor \frac{t}{2}\right\rfloor=a+b$ and $s=\left\lceil\frac{t}{2}\right\rceil$, a contradiction to $A\not=A_0$.
We assume  $a> \left\lfloor \frac{t}{2}\right\rfloor$ for the remaining.
By (\ref{e6.3}) and (\ref{e6.4}), we have $\max(2a+b+1, s+a+b+1)\leq t+1$, so
$$\begin{array}{lll}
		a+s(a+b)&\leq &\left\{
                           \begin{array}{ll}
                             a(a+b+1), & \hbox{if $s<a$;} \\
                             s(a+b+1), & \hbox{if $s\geq a$}
                           \end{array}
                         \right. \medskip\\
               &\leq& \displaystyle\left\lfloor\frac{t+1}{2}\right\rfloor \left\lceil\frac{t+1}{2}\right\rceil\medskip\\
               &=&\displaystyle\left\{
                           \begin{array}{ll}
			\displaystyle\left\lfloor \frac{t}{2}\right\rfloor+\left\lfloor\frac{t}{2}\right\rfloor \left\lceil\frac{t}{2}\right\rceil+1	, & \hbox{if $t$ is odd;} \smallskip \\
                             \displaystyle\left\lfloor \frac{t}{2}\right\rfloor+\left\lfloor\frac{t}{2}\right\rfloor \left\lceil\frac{t}{2}\right\rceil, & \hbox{if $t$ is even.}
                           \end{array}
                         \right.
\end{array}$$
Putting this information to (\ref{e6.6}) and using  $\rho(A_0)<\rho(J_{c+1})=c+1$,
we have $f(\rho(A_0))>0$,
 except that $t$ is odd, $s=a=(t+1)/2$, and $b=-1.$
There is only one  such matrix
$$A=
\begin{pmatrix}
J_{s\times s} & J_{s\times (c-1-s)} & J_{s\times 1}& J_{s\times 1}\\
J_{(c-1-s)\times s} & J_{(c-1-s)\times (c-1-s)} & J_{(c-1-s)\times 1}& O_{(c-1-s)\times 1}\\
J_{1\times s} & J_{1\times (c-1-s)} &  0& 0\\
J_{1\times s} & O_{1\times (c-1-s)} &  0& 0\\
\end{pmatrix}\oplus O_{n-c-1},$$
where $s=(t+1)/2$. Thus $2\leq t=2s-1\leq 2c-3$ and $c\geq 3$.
To compute $\rho(A)=\rho(A[c+1|c+1])$,
observe that $\Pi_2:=\{\{1,\ldots,s\},\{s+1,\ldots,c-1\}, \{c\},\{c+1\}\}$ is an equitable  partition of $A[c+1|c+1]$ and the quotient matrix $\Pi_2(A[c+1|c+1])$ is
$$
\begin{pmatrix}
s &c-1-s & 1 & 1 \\
s   &c-1-s & 1 & 0 \\
s   &c-1-s & 0 & 0 \\
s   & 0    & 0 & 0
\end{pmatrix},
$$
which has characteristic polynomial
$$h(x)=x^4-(c-1)x^3+(1-c-s)x^2+s(c-1-s)x+s(c-1-s).$$
For $x\geq c$, the derivative of $h(x)$ satisfies
\begin{align*}
h'(x)=& 4x^3-3(c-1)x^2+2(1-c-s)x+s(c-1-s)\\
     \geq&x((4x-3(c-1))x+2(1-c-s))\\
     >& c((4c-3(c-1))c +2(1-c-(c-2)))\\
     \geq& c((c+3)c-4c+6)>0.
\end{align*}
Then $h(x)$ is strictly increasing in the interval $(c, \infty)$.
Using the polynomial $g$  in \eqref{e6.5},
$$h(x)-(x+1)g(x)=(c-1-s)x+c-2s.$$
If $t\leq 2c-5$, then
\begin{align*} h(\rho(A_0))=&(c-1-s)\rho(A_0)+c-2s\\
                           >&(c-1-(c-2))c+c-2(c-2)=4>0,
\end{align*}
concluding $\rho(A)<\rho(A_0)$, since $\rho(A_0)\in (c, \infty)$.
If $2\leq t=2c-3$, then $s=c-1$, $A=A'_0$ and $h(\rho(A_0))=c-2(c-1)=-c+2<0.$
Hence $\rho(A_0)<\rho(A'_0).$
\qed

\begin{rem}
The above proof also shows that in the case $t=2c-3$, $A_0$ is the unique graph in   $\mathscr{S}^*(n,e)$ attaining the second largest spectral radius.
\end{rem}

\subsection{$(0, 1)$-matrices with zero trace}\label{s6.2}

In this subsection, we study $(0, 1)$-matrices with zero trace.
This study is parallel to proving Conjecture C. For easier comparison, we suppress the meaning of $\mathscr{S}(n,e)$ in Conjecture~C and let $\mathscr{S}(n,e)$ denote the set of $n\times n$ $(0, 1)$-matrices with zero trace having exactly $e$ ones.
We will prove the following theorem.

\begin{thm}\label{thm6.3}
If  $e=c(c-1)+t$, where $2\leq t\leq 2c-1$ are integers, and $A\in \mathscr{S}(n,e)$ attains the maximum spectral radius, then there exists a permutation matrix $P$ such that $PAP^T$ or $PA^TP^T$ has the form
\begin{equation}\label{e6.7}
A_0=\begin{array}{ll}
    \begin{pmatrix}
        J_c-I_c &
            \begin{array}{c}
                J_{\bigl\lfloor\frac{t}{2}\bigr\rfloor \times 1}\\
                O_{(c-\bigl\lfloor\frac{t}{2}\bigr\rfloor)\times 1}
            \end{array} \\
        \begin{array}{cc}
            J_{1\times \bigl\lceil\frac{t}{2}\bigr\rceil} & O_{1\times
            (c-\bigl\lceil\frac{t}{2}\bigr\rceil)}
        \end{array} &0
    \end{pmatrix}\oplus O_{n-c-1}.
\end{array}
\end{equation}
\end{thm}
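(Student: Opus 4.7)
The plan is to follow the proof of Conjecture~C in Section~\ref{s6.1} step by step, replacing each ingredient with its zero-trace analogue. First I would prove the zero-trace version of Lemma~\ref{le6.1}: redefine $\mathscr{S}^*(n,e)$ as the subset of $\mathscr{S}(n,e)$ (now restricted to zero-trace matrices) whose $1$'s form a staircase off the main diagonal, and adapt Friedland's shifting argument by disallowing any move that would place a $1$ on the diagonal. This yields a permutation $P$ with $PAP^T\in\mathscr{S}^*(n,e)$ of the form $A[k|k]\oplus O_{n-k}$, $A[k|k]$ irreducible, whenever $A\in\mathscr{S}(n,e)$ attains the maximum spectral radius.

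Next, fix $A\in\mathscr{S}^*(n,e)\setminus\{A_0,A_0^T\}$ and, by passing to $A^T$ if needed, assume that the number of $1$'s in $A[c|c)$ does not exceed the number in $A(c|c]$. Set $s:=r_{c+1}$ and apply Corollary~\ref{cor5.1'} with $\ell=c+1$, partition $\Pi=\{\{1\},\{2\},\ldots,\{c\},\{c+1,\ldots,n\}\}$, and the $(c+1)\times(c+1)$ rooted matrix
\begin{equation*}
M=\begin{pmatrix}
J_s-I_s & J_{s\times(c-s)} & \begin{array}{c} r_1-c+1\\ \vdots\\ r_s-c+1\end{array}\\
J_{(c-s)\times s} & J_{c-s}-I_{c-s} & \begin{array}{c} r_{s+1}-c+1\\ \vdots\\ r_c-c+1\end{array}\\
J_{1\times s} & O_{1\times(c-s)} & 0
\end{pmatrix}.
\end{equation*}
Compared with the matrix used in Section~\ref{s6.1}, the diagonal blocks become $J-I$ rather than $J$ (encoding $a_{ii}=0$) and the last-column entries are shifted by $+1$. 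Verifying that $M$ is rooted reduces to checking that $M+I_{c+1}$ is nonnegative with rooted columns and rooted row-sum vector, which in turn uses $r_i\geq c-1$ for $i\leq c$; any degenerate configurations where $r_c<c-1$ are sparse enough to be handled separately, being far from $A_0$ and admitting a direct spectral comparison.

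Then invoke Lemma~\ref{le4.5} with the equitable partition $\Pi_1=\{\{1,\ldots,s\},\{s+1,\ldots,c\},\{c+1\}\}$ of $M^T$ to reduce $\rho_r(M)$ to the largest real root of the characteristic polynomial of
\begin{equation*}
\Pi_1(M^T)=\begin{pmatrix} s-1 & c-s & 1\\ s & c-s-1 & 0\\ a & b & 0\end{pmatrix},
\end{equation*}
where $a=\sum_{i=1}^s(r_i-c+1)$ and $b=\sum_{i=s+1}^c(r_i-c+1)$. A direct expansion gives $f(x)=x^3+(2-c)x^2+(1-c-a)x+a(c-s-1)-sb$, and specializing to $A_0$ (namely $s=\lceil t/2\rceil$, $a=\lfloor t/2\rfloor$, $b=0$) yields the reference polynomial $g(x)$ whose largest real root is $\rho(A_0)$. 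Combinatorial constraints linking $s,a,b,t,$ and the number $r$ of off-diagonal zeros of $A[c|c]$, analogous to (\ref{e6.2})--(\ref{e6.4}), then follow from $|A|=c(c-1)+t$ and the balance assumption.

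The main obstacle is proving $f(\rho(A_0))>0$, equivalently $\rho_r(M)<\rho(A_0)$, uniformly across the allowed configurations. One uses calculus to show that $f$ is strictly increasing on a half-line containing $\rho(A_0)$ (the bound $\rho(A_0)>c-1=\rho(J_c-I_c)$ is the zero-trace analogue of $\rho(A_0)>c$), writes $f(\rho(A_0))-g(\rho(A_0))$ as an affine function of $\rho(A_0)$ with coefficients depending on $a-\lfloor t/2\rfloor$, $s-\lceil t/2\rceil$, and $b$, and splits into cases by the sign of $a-\lfloor t/2\rfloor$, mirroring the treatment of (\ref{e6.6}). Exceptional configurations analogous to the $s=a=(t+1)/2$, $b=-1$ case in Section~\ref{s6.1} will require passing to a finer four-part partition and a direct quartic comparison with $g$. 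Unlike Conjecture~C, where the $t=2c-3$ exception produced a secondary extremal matrix $A_0'$, Theorem~\ref{thm6.3} admits no secondary extremum, so each refined computation should close with strict inequality $\rho(A)<\rho(A_0)$ rather than revealing a new exception.
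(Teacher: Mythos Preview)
Your proposal is essentially the paper's own proof: the same reduction via the staircase class $\mathscr{S}^*(n,e)$, the same rooted matrix $M$ with $J-I$ diagonal blocks, the same quotient $\Pi_1(M^T)$, the same cubic $f$ and reference cubic $g$, the same constraints on $(s,a,b,r,t)$, the same case split on $a-\lfloor t/2\rfloor$, and the same four-block refinement for the exceptional configuration $s=a=(t+1)/2$, $b=-1$, which in the zero-trace setting indeed closes with $h(\rho(A_0))>0$ and no secondary extremum.

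Two small points. First, the paper does not reprove the zero-trace analogue of Lemma~\ref{le6.1}; it simply quotes it from Jin--Zhang as Lemma~\ref{lem6.4}. Second, your worry about configurations with $r_c<c-1$ is unnecessary: rootedness of $M$ only requires the first $c$ columns of $M+I_{c+1}$ and the row-sum vector $(r_1,\ldots,r_c,s)$ to be rooted, and the latter follows from $r_i\ge r_{c+1}=s$ by the staircase shape; the last column of $M$ is allowed to have negative entries, so no separate ``degenerate'' case arises.
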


For the not mentioned cases $t=0$ and $t=1$ in Theorem~\ref{thm6.3},
Y. Jin and X. Zhang   \cite{jz:15} proved that $PAP^T=(J_c-I_c)\oplus O_{n-c}$ if $t=0$;
$PAP^T=((J_c-I_c)\oplus O_{n-c})+E_{ij}$ if $t=1$, where $i>c$ or $j>c$, and an additional situation  in $e=3$ $(c=2, t=1)$,
$$PAP^T=\begin{pmatrix}0 & 1 & 0\\ 0 & 0 &1 \\ 1& 0 & 0\end{pmatrix}\oplus O_{n-3}$$
is also possible. Y. Jin and X. Zhang  \cite{jz:15} also proved Theorem~\ref{thm6.3} in the cases that $t$ is relatively small.

Let $\mathscr{S}^*(n,e)$ denote the subset of $\mathscr{S}(n,e)$ which collects  matrices $A=(a_{ij})$ satisfying
$$\text{if $a_{ij}=1$, then $a_{hk}=1$ for $h\leq i$, $k\leq j$ and $h\ne k$}.$$
The following lemma is similar to Lemma~\ref{le6.1}.

\begin{lem}[\cite{jz:15}]\label{lem6.4}
If  $e=c(c-1)+t$, where $2\leq t\leq 2c-1$, and $A\in \mathscr{S}(n,e)$ attains the maximum spectral radius, then there exists a permutation matrix $P$ such that $PAP^T\in \mathscr{S}^*(n,e)$  and $PAP^T$ has the form
$A[k|k]\oplus O_{n-k}$ for some $k$ and the submatrix $A[k|k]$ is irreducible.
\end{lem}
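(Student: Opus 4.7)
The plan is to adapt the proof of Lemma~\ref{le6.1} (due to Friedland~\cite{f:85}) to the zero-trace setting, following the scheme of Jin--Zhang~\cite{jz:15}. The only genuine modification of Friedland's argument is that every entry swap or relocation must preserve the zero-diagonal constraint; this is the sole reason the definition of $\mathscr{S}^*(n,e)$ in this subsection carries the extra clause $h\neq k$. I proceed in two stages.

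Stage~1 (isolating an irreducible principal block). Let $A\in\mathscr{S}(n,e)$ be a maximizer. After a simultaneous row/column permutation, put $A$ into Frobenius normal form: $A$ becomes block upper-triangular with irreducible or zero diagonal blocks $B_1,\ldots,B_s$, and $\rho(A)=\max_m\rho(B_m)=\rho(B_1)$, say, with $B_1=A[\beta|\beta]$. Any $1$-entry of $A$ lying outside the $\beta\times\beta$ block --- either in an off-diagonal block, or in some other $B_m$ --- can be relocated to a currently empty off-diagonal slot inside (a possibly enlarged) $\beta$; by Lemma~\ref{le2.2} each such relocation weakly increases $\rho(A)$, and the zero-trace property is preserved because the destination is off-diagonal. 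Maximality forces equality throughout, so after finitely many relocations all $e$ ones are concentrated in a single irreducible principal submatrix $B$, and after relabeling one obtains $PAP^T=B\oplus O_{n-k}$ with $B$ irreducible.

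Stage~2 (staircase via entry swaps). Let $u,v>0$ be the right and left Perron eigenvectors of $B$. Choose a single permutation of $\{1,\ldots,k\}$ and conjugate $B$ by the corresponding permutation matrix so that both $u$ and $v$ become simultaneously weakly decreasing. Given such a sorting, if $b_{ij}=1$ and $b_{hk}=0$ with $h\le i$, $k\le j$, and $h\ne k$, the entry swap $B\mapsto B+E_{hk}-E_{ij}$ keeps $B$ in $\mathscr{S}(k,e)$ (zero trace is preserved by $h\ne k$) and, by the rearrangement lemma for Perron roots of nonnegative matrices used in~\cite{f:85,jz:15}, does not decrease $\rho(B)$ since $v_h u_k\ge v_i u_j$. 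Maximality forces equality; iterating terminates in a matrix in $\mathscr{S}^*(n,e)$, which together with Stage~1 gives the required $A[k|k]\oplus O_{n-k}$ form.

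The main obstacle is the simultaneous sorting of $u$ and $v$ by a single symmetric permutation: for symmetric $A$ one has $u=v$ and the sorting is automatic, but here $A$ may be asymmetric. I would resolve this by an interleaved bootstrap that combines the swap procedure of Stage~2 with a re-sort of the indices after each swap; a monovariant such as the weighted count of anti-staircase pairs $\sum v_i u_j$ (summed over violating positions) strictly decreases until the two orderings agree, at which point the fixed-point matrix lies in $\mathscr{S}^*(n,e)$. This bootstrap, together with the verification that $B$ remains irreducible at every step (which uses $t\ge 2$, so that enough off-diagonal freedom is always available), is executed in detail for the zero-trace case in~\cite{jz:15} and can be cited for the combinatorial book-keeping.
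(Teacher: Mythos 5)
The paper does not prove Lemma~\ref{lem6.4}; it simply cites it from Jin and Zhang~\cite{jz:15}, so there is no in-paper proof to compare against. Your sketch outlines the right general strategy (isolate an irreducible principal block, then push ones toward the upper-left via entry swaps), which is indeed the scheme of Friedland~\cite{f:85} and Jin--Zhang~\cite{jz:15}, but the intermediate reasoning you supply has real gaps.

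The main problem is in Stage~2. The ``rearrangement lemma'' you invoke --- that $v_h u_k \ge v_i u_j$ (with $u,v$ the Perron eigenvectors of $B$) implies $\rho(B+E_{hk}-E_{ij})\ge \rho(B)$ --- is not a theorem in that generality. What one gets from $u,v$ is only the first-order derivative $\rho'(0) \propto v_hu_k-v_iu_j$ along the interpolation $B_t=B+t(E_{hk}-E_{ij})$; this does not control $\rho(B_1)$, and the bilinear quotient $v^{T}B'u/(v^{T}u)$ is \emph{not} a lower bound for $\rho(B')$ in general. The rigorous versions are the row-only swap ($h=i$: then $B'u=\rho u+(u_k-u_j)e_i\ge\rho u$ whenever $u_k\ge u_j$, giving $\rho(B')\ge\rho$ via Theorem~\ref{th2.1}(iii)) and the column-only swap (dual, using $v^{T}B'\ge\rho v^{T}$); a diagonal swap must be decomposed into such steps or handled separately. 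Your acknowledged simultaneous-sorting obstacle is also real and not cosmetic: the left and right Perron vectors of the maximizer need not admit a common sorting permutation, and the proposed ``monovariant $\sum v_iu_j$ over violating positions'' bootstrap is not actually shown to decrease --- both $u$ and $v$ change after every swap and re-sort, so its monotonicity is far from automatic. Finally, in Stage~1 the appeal to Lemma~\ref{le2.2} is misapplied: a relocation is not a monotone increase of entries; the correct justification is that after relocating a $1$ into the block, the new matrix still contains $B_1$ (or $B_1$ with an extra off-diagonal $1$) as a principal submatrix, hence $\rho(A')\ge\rho(B_1)=\rho(A)$. In sum, you ultimately defer the hard combinatorial work to~\cite{jz:15} --- which is in fact all the paper itself does --- but the surrounding reasoning you provide would need to be repaired before the sketch could stand on its own.
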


\bigskip

\noindent{\it Proof of Theorem \ref{thm6.3}.}

 Let $e=c(c-1)+t$, where $2\leq t\leq 2c-1$. The matrix $A_0$ in (\ref{e6.7}) is in $\mathscr{S}^*(n,e)$.  We will show that $\rho(A)<\rho(A_0)$  for every $A\in \mathscr{S}^*(n,e)-\{A_0, A^T_0\}$.  Then Theorem~\ref{thm6.3} is proved by Lemma \ref{lem6.4}.
The proof is almost a copy of the proof of Conjecture~C. For the interested reader to check, we also provide the details. Other reader might go to the next section directly without missing important background.

By considering $A^T$ if necessary, we might assume that
the number of $1$'s in $A[c|c)$ is no larger than that in $A(c|c]$.
Let $(r_1, r_2, \ldots, r_n)$ denote the row-sum vector of $A$.  Since $A[c+1~|~c+1]$ is irreducible by Lemma~\ref{lem6.4}, $0<r_{c+1}<\max (c+1, t)$.  Let
$s:=r_{c+1}$.
Applying Corollary~\ref{cor5.1'} with $\ell=c+1,$ $C=A$, partition $\Pi=\{\{1\}$, $\{2\}$, $\ldots,$ $\{c\}$,  $\{c+1, c+2,\ldots, n\}\},$ and the following $(c+1)\times (c+1)$ rooted matrix
$$
M=\begin{pmatrix}
J_s-I_s & J_{s\times (c-s)}                 &\begin{array}{c} r_1-c+1\\ r_2-c+1 \\ \vdots   \end{array} \\
J_{(c-s)\times s}      & J_{c-s}-I_{c-s}  &\begin{array}{c}     \\  r_c-c+1\end{array}\\
J_{1\times s} &  O_{1\times (c-s)}               & 0
\end{pmatrix},
$$ we have $\rho(A)\leq \rho_r(M)$.
 To find $\rho_r(M)$, we consider the partition
$\Pi_1=\{\{1,2,\ldots,s\}$, $\{s+1, \ldots, c\}$, $\{c+1\}\}$ of $[c+1]$, and  observe that  $\Pi_1$ is an equitable partition of $M^T$. According to $s=c$ or $s<c$, the equitable quotient matrix $\Pi_1(M^T)$ has one of the following two forms
\begin{equation}\label{e6.8}
\begin{pmatrix}
c-1& 1\\
a & 0
\end{pmatrix}, \quad
\begin{pmatrix}
s-1 & c-s   & 1 \\
s   & c-s-1 & 0 \\
a   &  b    & 0
\end{pmatrix},
\end{equation}
respectively, where $a=\sum_{i=1}^s (r_i-c+1), \ b=\sum_{i=s+1}^c (r_i-c+1).$
Observe that the first matrix is the degenerated case of the second.
Note that $A=A_0$ implies $s=\lceil\frac{t}{2}\rceil$, $a=\lfloor\frac{t}{2}\rfloor$ and $b=0$. The converse is also true: Notice that $$a+b+s+\sum_{i=c+2}^n r_i=t$$
holds generally in $A$.
The conditions $s=\lceil\frac{t}{2}\rceil$, $a=\lfloor\frac{t}{2}\rfloor$ and $b=0$ imply
$A(c+1|c]=O_{(n-c-1)\times c}$ and $A[\{s+1, s+2, \ldots, c\}|c)=O_{(c-s)\times (n-c)}$, and $A(c+1|c]=O_{(n-c-1)\times c}$ also implies
$A[c|c+1)=O_{c\times (n-c-1)}$ from the shape of $A$ described in Lemma~\ref{lem6.4}, so $A=A_0$ follows.

We will provide important constraints between $s,a, b$ and $t$, which are exactly the same as in Section \ref{s6.1}. Let $r$ denote the number of off-diagonal zeros in $A[c|c].$ Then $a+b+r$ is  the number of $1$'s in $A[c|c)$. From the assumption in the beginning, we have  $a+b+r \leq (t+r)/2$.
Since the integer $a$ is at most the number of $1$'s in $A[s|c)$, we have
\begin{equation}\label{e6.9}
a\leq a+b+r\leq (t+r)/2.
\end{equation}
In particular, $a+b\leq (t-r)/2$ and
\begin{equation}\label{e6.10}
2a+b\leq t.
\end{equation}
Since the number of $1$'s in $A(c|s]$ is $t-a-b$, we have
\begin{equation}\label{e6.11}
 s\leq t-a-b.
\end{equation}
Since $\rho_r(M)=\rho_r(\Pi_1(M^T))$ by Lemma~\ref{le4.5}, it suffices to show $\rho(\Pi_1(M^T))< \rho(A_0)$.
The characteristic polynomial of $\Pi_1(M^T)$ in (\ref{e6.8}) is $f(x)/(x+1)$ or $f(x)$
according to $s=c$ or $s<c$, where
\begin{equation}\label{e.105}
f(x)=x^3-(c-2)x^2+(1-c-a)x+a(c-s-1)-sb.
\end{equation}
We use Calculus to study the shape of the polynomial $f(x)$.  If $x>c-1$, the derivative of $f(x)$ satisfies
\begin{align*}
f'(x)=&x(3x+2(2-c))+1-c-a\\
     >& (c-1)(3(c-1)+2(2-c))+1-c-a\\
     =& (c-1)c-a\geq c(c-1)-\frac{t+r}{2}\\
 \geq& c(c-1)-\frac{2c-1+(c-1)(c-2)}{2}\geq0
\end{align*}
by \eqref{e6.9}. Hence $f(x)$ is increasing in the interval $(c-1, \infty).$  Since $\rho(A_0)>\rho(K_c)=c-1$,
to prove $\rho_r(M)<\rho(A_0)$,
it suffices to show that $f(\rho(A_0))>0$.
Setting $s=\left\lceil\frac{t}{2}\right\rceil$, $a=\left\lfloor\frac{t}{2}\right\rfloor$, and $b=0$ in (\ref{e.105}), $\rho(A_0)$ is the largest zero of
\begin{equation}\label{e6.12}
g(x):=x^3-(c-2)x^2+\left(1-c-\left\lfloor\frac{t}{2}\right\rfloor\right)x+\left\lfloor\frac{t}{2}\right\rfloor\left(c-\left\lceil\frac{t}{2}\right\rceil-1\right).
\end{equation}
Hence
\begin{align}\label{e6.13}
f(\rho(A_0))=&f(\rho(A_0))-g(\rho(A_0)) \nonumber \\
            =&\left(\left\lfloor \frac{t}{2}\right\rfloor-a\right) (\rho(A_0)-c+1) -s\left(a+b\right)+ \left\lfloor\frac{t}{2}\right\rfloor \left\lceil\frac{t}{2}\right\rceil.
\end{align}

As in Section \ref{s6.1}, if $a\leq \left\lfloor \frac{t}{2}\right\rfloor$, then  we immediately have $f(\rho(A_0))\geq 0$ from (\ref{e6.13}), since $s+a+b\leq t$ in (\ref{e6.11}) implies $s(a+b)\leq \left\lfloor\frac{t}{2}\right\rfloor \left\lceil\frac{t}{2}\right\rceil$; and indeed $f(\rho(A_0))> 0$, since  $f(\rho(A_0))= 0$ only happens when
 $a=\left\lfloor \frac{t}{2}\right\rfloor=a+b$ and $s=\left\lceil\frac{t}{2}\right\rceil$, a contradiction to $A\not=A_0$.
We assume  $a> \left\lfloor \frac{t}{2}\right\rfloor$ for the remaining.
By (\ref{e6.10}) and (\ref{e6.11}), we have $\max(2a+b+1, s+a+b+1)\leq t+1$, so
$$\begin{array}{lll}
		a+s(a+b)&\leq &\left\{
                           \begin{array}{ll}
                             a(a+b+1), & \hbox{if $s<a$;} \\
                             s(a+b+1), & \hbox{if $s\geq a$}
                           \end{array}
                         \right. \medskip\\
               &\leq& \displaystyle\left\lfloor\frac{t+1}{2}\right\rfloor \left\lceil\frac{t+1}{2}\right\rceil\medskip\\
               &=&\displaystyle\left\{
                           \begin{array}{ll}
			\displaystyle\left\lfloor \frac{t}{2}\right\rfloor+\left\lfloor\frac{t}{2}\right\rfloor \left\lceil\frac{t}{2}\right\rceil+1	, & \hbox{if $t$ is odd;} \smallskip \\
                             \displaystyle\left\lfloor \frac{t}{2}\right\rfloor+\left\lfloor\frac{t}{2}\right\rfloor \left\lceil\frac{t}{2}\right\rceil, & \hbox{if $t$ is even.}
                           \end{array}
                         \right.
\end{array}$$
Putting this information to (\ref{e6.13}) and using  $\rho(A_0)<\rho(J_{c+1}-I_{c+1})=c$,
we have $f(\rho(A_0))>0$,
 except that $t$ is odd, $s=a=(t+1)/2$, and $b=-1.$
There are two  such matrices $A$ and $A^T$ and they have the same spectral radius, where
$$A=
\begin{pmatrix}
J_{s\times s}-I_s & J_{s\times (c-1-s)} & J_{s\times 1}& J_{s\times 1}\\
J_{(c-1-s)\times s} & J_{(c-1-s)\times (c-1-s)}-I_{c-1-s} & J_{(c-1-s)\times 1}& O_{(c-1-s)\times 1}\\
J_{1\times s} & J_{1\times (c-2-s)}~~~~0 &  0& 0\\
J_{1\times s} & O_{1\times (c-1-s)} &  0& 0\\
\end{pmatrix}\oplus O_{n-c-1}.$$
Note that $s=r_{c+1}\leq c-2$ in the above $A$, since a zero in the $(c,c-1)$ position causes a zero in $(c+1,c-1)$ position. To compute $\rho(A)=\rho(A[c+1|c+1])$,
observe that $\Pi_2:=\{\{1,\ldots,s\},\{s+1,\ldots,c-1\}, \{c\},\{c+1\}\}$ is an equitable  partition of $A[c+1|c+1]$ and the quotient matrix $\Pi_2(A[c+1|c+1])$ is
$$
\begin{pmatrix}
s-1 &c-1-s & 1 & 1 \\
s   &c-2-s & 1 & 0 \\
s   &c-2-s & 0 & 0 \\
s   & 0    & 0 & 0
\end{pmatrix},
$$
which has characteristic polynomial
$$h(x)=x^4-(c-3)x^3+(4-2c-s)x^2+((c-2)(s-1)-s^2)x-s(s-c+2).$$
Since the derivative of $h(x)$ is
\begin{align*} h'(x)=& 4x^3-3(c-3)x^2+2(4-2c-s)x+(c-2)(s-1)-s^2 \\
                    =& x(3x(x-(c-3))+8-4c-2s)+(x^3+(c-2)(s-1)-s^2),
\end{align*}
for $x\geq c-1\geq s+1$, we have $h'(x)\geq x(3(c-1)\cdot 2 +8-4c-2(c-2))>0$.
Then $h(x)$ is strictly increasing in the interval $(c-1, \infty)$.
Since $\rho(A_0)>\rho(J_c-I_c)=c-1$, to prove $\rho(A_0)>\rho(A)$, it suffices to show $h(\rho(A_0))>0$.
Since $\rho(A_0)$ is the zero of the polynomial $g$  in \eqref{e6.12}, we first compute  
$$h(x)-(x+1)g(x)=(c-1-s)x+c-1-2s,$$
and then find 
\begin{align*} h(\rho(A_0))=&(c-1-s)\rho(A_0)+c-1-2s\\
                           >&(c-1-(c-2))(c-1)+c-1-2(c-2)=2.
\end{align*}
\qed

\begin{rem}
If we follow the proof of Theorem \ref{thm6.3} to symmetric $(0,1)$-matrices with zero trace, the proof will be finished in the middle. Hence this gives an alternative proof of Conjecture B.
\end{rem}

\subsection{Nonnegative matrices with prescribed sum of entries}\label{s6.3}

We recall an old  result in 1987 due to R. Stanley  \cite{s:87}.

\begin{thm}[\cite{s:87}]\label{th6.6}
Let $C=(c_{ij})$ be an $n\times n$ symmetric $(0,1)$-matrix with zero trace. Let $2e$ be the number of 1's of $C$. Then
$$\rho(C)\leq \frac{-1+\sqrt{1+8e}}{2},$$
with equality if and only if
$$e=\frac{k(k-1)}{2}$$
and there exists a permutation matrix $P$ such that
$$
PCP^T=(J_k-I_k)\oplus O_{n-k}
$$
for some positive integer $k$.
\end{thm}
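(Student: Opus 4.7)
The plan is to apply Corollary~\ref{cor5.1'} with the singleton partition $\Pi=\{\{1\},\{2\},\ldots,\{n\}\}$, after relabeling so that the smallest row-sum index of $C$ lies in position $n$. Set $M:=M_n$ from \eqref{e4.2} with parameters $d=0$, $f_1=f_2=1$, and $r_i = r_i(C)$ for $1\leq i\leq n$. Then $m_{ii}=0$ and $m_{ij}=1$ for $1\leq i\leq n$, $1\leq j\leq n-1$ with $i\ne j$, while the row-sum vector of $M$ equals that of $C$. Since $C$ is a $(0,1)$-matrix with zero diagonal, both the entrywise condition $c_{ij}\leq m_{ij}$ (for $j\leq n-1$) and the row-sum condition of Corollary~\ref{cor5.1'} are verified. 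Hence $\rho(C)\leq \rho_r(M)$, and by Lemma~\ref{le4.7}(i),
\begin{equation*}
\rho_r(M)=\frac{r_n-1+\sqrt{(r_n+1)^2+4\sum_{i=1}^{n-1}(r_i-r_n)}}{2}.
\end{equation*}

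Next, I would show by direct algebra that $\rho_r(M)\leq \frac{-1+\sqrt{1+8e}}{2}$. Using $\sum_{i=1}^{n-1}(r_i-r_n)=2e-n r_n$ and observing that $2e\geq n r_n$ forces $r_n\leq \sqrt{1+8e}$, a squaring argument reduces the inequality to $2e\leq n(n-1)$, which holds because $C$ has zero diagonal and $(0,1)$-entries. When $r_n\geq 1$ the inequality is strict unless $2e=n(n-1)$, in which case all off-diagonal entries of $C$ must equal $1$, forcing $C=J_n-I_n$; when $r_n=0$ the inequality becomes an equality with no further constraint.

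For the equality characterization, I would proceed by induction on $n$, with the trivial base $n=1$. Suppose $\rho(C)=\frac{-1+\sqrt{1+8e}}{2}$, so both inequalities in the cascade are tight. If $r_n\geq 1$, the preceding paragraph gives $C=J_n-I_n$, and the conclusion holds with $k=n$ and $e=\binom{k}{2}$. If $r_n=0$, the last row and column of $C$ vanish, so $C$ is conjugate to $C'\oplus O_1$ where $C'$ is $(n-1)\times(n-1)$ with the same $2e$ ones and $\rho(C')=\rho(C)$; applying the inductive hypothesis to $C'$ yields its claimed form, which lifts back to the required form for $C$.

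The main obstacle is the second inequality $\rho_r(M)\leq \frac{-1+\sqrt{1+8e}}{2}$ together with its sharp equality condition, which splits cleanly into the cases $r_n=0$ and $r_n\geq 1$ but must be checked with care. Once that is handled, the matrix-realization bound from Corollary~\ref{cor5.1'} and a short induction deliver the full statement.
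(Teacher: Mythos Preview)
Your argument is correct. The first inequality $\rho(C)\le\rho_r(M_n)$ is exactly the specialization $d=0$, $f=1$, $\ell=n$ of Theorem~\ref{th5.4}, and your algebraic reduction of the second inequality to $2e\le n(n-1)$ is sound (one small quibble: the claim ``$2e\ge nr_n$ forces $r_n\le\sqrt{1+8e}$'' needs the extra observation $r_n\le n-1$, giving $r_n^2\le r_n(n-1)<nr_n\le 2e$; you should make that explicit). The equality analysis via the dichotomy $r_n=0$ versus $r_n\ge 1$, together with the induction step using the symmetry of $C$ to kill both the last row and the last column, is clean and complete.

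The paper does not prove Theorem~\ref{th6.6} directly; it only quotes Stanley and then proves the strict generalization in the theorem that follows. That proof, specialized to $d=0$, $f=1$, takes a shorter path than yours: instead of working with $M_n$ and then bounding $\rho_r(M_n)$ against $\tfrac{-1+\sqrt{1+8e}}{2}$ by a separate calculation, it first augments $C$ to $C^+=C\oplus O_1$, so that the \emph{last} row-sum of $C^+$ is guaranteed to be $0$. One then applies Theorem~\ref{th3.4} with $C'=M_{n+1}$ and invokes Lemma~\ref{le4.7}(ii) (the case $r_n=0$), which gives the target bound $\tfrac{-1+\sqrt{1+8e}}{2}$ in a single stroke, with no secondary inequality and no induction. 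The equality case is then read off directly from Theorem~\ref{th3.4}(b) and Lemma~\ref{le4.3}(ii), which force $C^+[k|k]=J_k-I_k$ on the support of the eigenvector and zeros elsewhere. In short: your route goes through Theorem~\ref{th5.4} plus an extra optimization over $r_n$, whereas the paper's trick of adjoining a zero row absorbs that optimization into the construction of $C'$ and makes the bound exact from the start.
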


The following theorem generalizes Theorem~\ref{th6.6} to nonnegative matrices, not necessarily symmetric.
\begin{thm}
Let $C=(c_{ij})$ be an $n\times n$ nonnegative matrix. Let $m$ be the sum of entries
and $d$ (resp. $f$) be the largest diagonal element (resp. the largest off-diagonal element) of $M$.
Then
 \begin{equation}\label{e6.14}
\rho(C)\leq \frac{d-f+\sqrt{(d-f)^2+4mf}}{2}.
\end{equation}
Moreover, if $f>0$ then the equality in (\ref{e6.14}) holds
 if and only if
there exists a permutation matrix $P$ such that
\begin{equation}\label{e6.14.5}PCP^T=(fJ_k+(d-f)I_k)\oplus O_{n-k}\end{equation}
for some positive integer $k$.
\end{thm}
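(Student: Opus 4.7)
My plan is to derive the bound (\ref{e6.14}) as an immediate corollary of Theorem~\ref{th5.4} via a zero-padding trick, and then to handle the equality statement in two stages: first reduce $C$ to an irreducible principal block through the Frobenius normal form together with a strict-monotonicity analysis of the bound, and second pin down that block by applying Theorem~\ref{th5.4} one more time.

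For the upper bound, I append a zero row and a zero column to $C$ to form an $(n+1)\times(n+1)$ matrix $\tilde C$, which has the same spectral radius and the same values of $d$ and $f$ as $C$, but now with minimum row-sum $0$. Invoking Theorem~\ref{th5.4} on $\tilde C$ with $\ell=n+1$ collapses the expression under the radical to $(d-f)^2+4fm$, yielding (\ref{e6.14}) at once.

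Now suppose $f>0$ and equality holds; write $B_0$ for the right-hand side of (\ref{e6.14}) and $B_0(x,y,z)$ for the analogous expression with $(x,y,z)$ in place of $(d,f,m)$. By the Frobenius normal form there exists $I\subseteq[n]$ of size $k$ with $D:=C[I|I]$ irreducible and $\rho(D)=\rho(C)=B_0$. Let $d_D,f_D,m_D$ denote the corresponding quantities for $D$. The inequality already proved applied to $D$ gives $\rho(D)\leq B_0(d_D,f_D,m_D)\leq B_0(d,f,m)$, where the second step uses the componentwise comparisons $d_D\leq d$, $f_D\leq f$, $m_D\leq m$. A short calculus check shows $B_0(x,y,z)$ is strictly increasing in each of its three variables whenever $y>0$ and $z>x\geq 0$; since the entries realizing $d$ and $f$ occupy distinct positions we have $m\geq d+f>d$, so these monotonicities are strict in our setting. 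Chasing the chain of equalities forces $d_D=d$, $f_D=f$, $m_D=m$, and nonnegativity then collapses every entry of $C$ outside $D$ to zero, so $PCP^T=D\oplus O_{n-k}$ for some permutation matrix $P$.

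To identify $D$, I apply Theorem~\ref{th5.4} directly to $D$ (no padding) with $\ell=k$, getting a bound $B_k^D$ depending on the least row-sum $r$ of $D$. A short algebraic manipulation---expanding $2B_0-2B_k^D$, rearranging, and squaring once---yields $B_k^D\leq B_0$ with equality if and only if $r=0$ or $m=kd+k(k-1)f$. Irreducibility of $D$ precludes $r=0$ whenever $k\geq 2$, and the option $k=1$ is excluded because it would force $f_D=0\neq f$; thus $m=kd+k(k-1)f$, the maximum total permitted by the entry bounds, forcing every diagonal of $D$ to equal $d$ and every off-diagonal to equal $f$, i.e.\ $D=fJ_k+(d-f)I_k$, which combined with $PCP^T=D\oplus O_{n-k}$ gives exactly (\ref{e6.14.5}). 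The converse reduces to the identity $(d-f+2fk)^2=(d-f)^2+4f(kd+k(k-1)f)$, giving $\rho((fJ_k+(d-f)I_k)\oplus O_{n-k})=d+(k-1)f=B_0$. The main technical obstacle, as I see it, is the strict monotonicity of $B_0$ in the parameter $f$, which holds only under $m>d$; tracking this condition through the reduction to the block $D$ is the step that will demand the most care.
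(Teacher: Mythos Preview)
Your argument is correct, and for the inequality it coincides with the paper's: both pad $C$ to $C\oplus O_1$ so that the minimum row-sum becomes $0$, and then read off the bound from the paper's machinery (you via Theorem~\ref{th5.4}, the paper via Theorem~\ref{th3.4} and Lemma~\ref{le4.7}(ii), which is the same computation one level down in the hierarchy).

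For the equality characterization the two proofs diverge. The paper stays inside its eigenvector framework: it keeps the padded matrix $C^+$, invokes the equality clause of Theorem~\ref{th3.4}, and uses the positivity and strict rootedness of the eigenvector $v'$ of $C'=M_{n+1}$ (Lemma~\ref{le4.3}, Lemma~\ref{le4.6}) to force $C^+[k|k]=fJ_k+(d-f)I_k$ and to kill the remaining columns. Your route is external to that machinery: you pass to an irreducible diagonal block $D$ via the Frobenius normal form, use strict monotonicity of $B_0(x,y,z)$ in each variable to pin $d_D=d$, $f_D=f$, $m_D=m$ (hence collapse everything outside $D$), and then finish with the purely algebraic comparison $B_k^D\le B_0$, whose equality case $m=kd+k(k-1)f$ saturates every entry of $D$. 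This trades the eigenvector bookkeeping for two elementary side computations (the calculus check and the one-squaring algebra), and has the virtue of treating the bound as a black box. One small point of presentation: your strict-monotonicity claim is stated only for $y>0$, so the $k=1$ case (where $f_D$ is vacuous, effectively $0$) should be disposed of \emph{before} you assert $f_D=f$, e.g.\ by the direct estimate $\rho(D)\le d<B_0$; your text does handle this, but the sentence order makes it read circularly.
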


\begin{proof} If $f=0$ then the nonzero entries only appear in the diagonal of $C$, so $\rho(C)\leq d$ and
(\ref{e6.14}) holds. Assume $f>0$.
Consider the $(n+1)\times (n+1)$ nonnegative matrix $C^+=C\oplus O_1$ which  has row-sum vector $(r_1,r_2,\ldots,r_n,r_{n+1})$ with $r_{n+1}=0$ and  a nonnegative left eigenvector $v^T$ for $\rho(C^+)=\rho(C)$.
Let $C'=M_{n+1}$ defined in (\ref{e4.2}) with $f_1=f_2=f$ such that
 $C'$ has the same row-sum vector as $C^+$ and $C^+[n|n]\leq C'[n|n]=fJ_n+(d-f)I_n$.
 Note that $C'$ has a positive rooted eigenvector $v'=(v'_1, v'_2, \ldots, v'_{n+1})^T$ for $\rho_r(C')$ by Lemma~\ref{le4.6}, so $v^Tv'>0.$ Hence the assumptions of Theorem~\ref{th3.4}
hold with $(C, \lambda, \lambda')=(C^+, \rho(C^+), \rho_r(C'))$.
 Now by Theorem~\ref{th3.4} and Lemma~\ref{le4.7}(ii), we have
$$\rho(C)=\rho(C^+)\leq \rho_r(C')=\frac{d-f+\sqrt{(d-f)^2+4mf}}{2},$$
finishing the proof of the first part.

To prove the sufficiency in the  second part, assume $f>0$ and (\ref{e6.14.5}). Note that $m=k^2f+k(d-f)$. Using $\rho(C)=\rho(PCP^T)=\rho(fJ_k+(d-f)I_k),$ we have
$$\rho(C)=kf+(d-f)=\frac{d-f+\sqrt{(d-f)^2+4mf}}{2}.$$

For proving necessity, assume $f>0$ and $\rho(C)=\rho_r(C')$. Then $C\ne O_n$ and $C^+\ne O_{n+1}$.
We will apply Theorem \ref{th3.4} with $C=C^+=(c^+_{ij})$. Let $v^T=(v_1, v_2, \ldots, v_{n+1})$ be a nonnegative left eigenvector of $C^+$ for $\rho(C^+)$. Then $v_{n+1}=0$.
By rearranging the indices of $C$ we might assume that for some $k\leq n$,  $r_i>0$ for $1\leq i\leq k$ and  $C^+(k|n+1]=O_{(n+1-k)\times (n+1)}$. Since $r_i$ are also row-sums of $C'$ and by Lemma~\ref{le4.3}(ii), we have $v'_j>v'_{n+1}$ for $1\leq j\leq k$.
By Theorem \ref{th3.4}(b), we have  $c^+_{ij}=c'_{ij}$ for $1\leq i\leq n,$ $1\leq j\leq k$ with $v_i\not=0$. With  $v[k]^T:=(v_1, v_2, \ldots, v_k),$ we have
\begin{equation}\label{e6.15} \rho(C)v^T=v^TC^+=v[k]^TC^+[k|n+1]\end{equation}
and
\begin{equation}\label{e6.16}
v[k]^TC^+[k|k]=v[k]^TC'[k|k]=v[k]^T(fJ_k+(d-f)I_k).
\end{equation}
From (\ref{e6.15}) and  (\ref{e6.16}), we have
$$\rho(C)v[k]^T=v[k]^TC^+[k|k]=v[k]^T(fJ_k+(d-f)I_k).$$
Since $v[k]^T$ is a left eigenvector of the irreducible matrix $fJ_k+(d-f)I_k$, $v[k]^T$ is positive, 
so $C[k|k]=C^+[k|k]=C'[k|k]=fI_k+(d-f)I_k$.
For $k+1\leq i\leq n+1$, we have $v_i=0$ by Theorem \ref{th3.4}(b), since $0=c^+_{i1}\ne c'_{i1}=f$.
Using (\ref{e6.15}) again, the last $n+1-k$ columns of $C^+$ are zero. Hence $C^+=(fJ_k+(d-f)I_k)\oplus O_{n+1-k}$, and consequently $C=(fJ_k+(d-f)I_k)\oplus O_{n-k}$.
\end{proof}

\section{Concluding remarks}

In this paper, we have developed a tool to obtain both upper and lower bounds for the spectral radius of a nonnegative matrix. This assists obtaining known and new results more easily and also make it easier to improve these results. The key tool, Theorem \ref{th3.4}, and the main result, Corollary~\ref{cor5.1'}, are special cases of Lemma~\ref{le3.1} when a specific triple $(C', P, Q)$ of square matrices is chosen according to a given square matrix $C$. Let us consider the simplest case that $C$ is a binary matrix. By choosing $P=I_n$, $Q=I_n+\sum_{i=1}^n E_{in}$, and $C'$  obtained from $J_n$ by changing the entries in its last column to maintain the same row-sums with $C$, we can
check easily at least the condition $PCQ\leq PC'Q$ in Lemma~\ref{le3.1}(i) holds
and expect the remaining conditions (ii)-(iv) hold to conclude that $\rho(C)\leq \rho(C').$

We believe it is worth focusing more on investigating the bounds derived from selecting other triples of $(C', P, Q)$, as this exploration may be helpful in solving many extremal problems related to graphs and their spectral radii. For example, a result of P. Csikv\'{a}ri in 2009 \cite{c:09} stating that the spectral radius of a symmetric $(0, 1)$ matrix $C$ will not be decreased after a {\it Kelmans transformation} \cite{k:81} can be reproved by the method in this paper taking $P=I_n+E_{ij}=Q^T$ and $C'$ to be the Kelmans transformation of $C$ from $i$ to $j$. This result extends to a nonsymmetric matrix with a minor assumption by essentially the same proof \cite{kw:21}.

\section*{Acknowledgements}
This research is supported by the Ministry of Science and Technology of Taiwan
R.O.C. under the projects MOST 110-2811-M-A49-505 and MOST 111-2115-M-A49-005-MY2.

\end{document}